\newtheorem{thm}{Theorem}[section]
 \newtheorem{lem}{Lemma}[section]
 \newtheorem{prop}{Proposition}[section]
\newtheorem{rem}{Remark}[section]
\def\Id{{\rm Id}\,}
\def\d{\partial}
\def\ddj{\dot \Delta_j}
\def\tilde{\widetilde}
\def\wt{\widetilde}
\newcommand{\bFormula}[1]{\begin{equation} \label{#1}}
\newcommand{\eF}{\end{equation}}
\newcommand{\Div}{{\rm div}_x}
\newcommand{\Grad}{\nabla_x}
\newcommand{\vr}{\varrho}
\newcommand{\bTheorem}[1]{\begin{Theorem} \label{T#1} }
\newcommand{\eT}{\end{Theorem}}
\newcommand{\bProposition}[1]{\begin{Proposition} \label{P#1}}
\newcommand{\eP}{\end{Proposition}}
\newcommand{\bLemma}[1]{\begin{Lemma} \label{L#1} }
\newcommand{\eL}{\end{Lemma}}
\newcommand{\bCorollary}[1]{\begin{Corollary} \label{C#1} }
\newcommand{\eC}{\end{Corollary}}
\newcommand\R{\mathbb{R}}
\newcommand\Z{\mathbb{Z}}
\newcommand{\ep}{\varepsilon}
\renewcommand{\div}{\mbox{\rm div}\;\!}
\def\dr{\delta\!\rho}
\def\dw{\delta\!w}
\def\df{\delta\!\phi}
\def\cC{{\mathcal C}}
\def\cF{{\mathcal F}}
\newcommand{\Int}{\displaystyle \int}
\newcommand{\with}{\quad\hbox{with}\quad}
\newcommand{\andf}{\quad\hbox{and}\quad}
\begin{document}

\title[Compressible Euler system
with various couplings]{The global existence issue  for the compressible Euler system
with Poisson or Helmholtz  couplings}
\author{X. Blanc, R. Danchin, B. Ducomet and \v{S}. Ne\v{c}asov\'a}
\begin{abstract}
\noindent We consider the Cauchy problem for the barotropic Euler system coupled to Helmholtz or Poisson equations, in the whole space.
We assume that the initial density is small enough, and that the initial velocity is  close to some
reference vector field $u_0$ such that the spectrum of $Du_0$ is bounded away from zero. We prove the existence of a global unique
solution with  (fractional) Sobolev regularity, and algebraic time decay estimates. 
Our work extends the papers  by D. Serre and M. Grassin  \cite{GS,G2,S}
dedicated to  the compressible Euler system without coupling and  integer regularity exponents. 
\end{abstract}
\maketitle
{\bf Keywords:} compressible Euler system, Helmholtz, Poisson, global solution, decay.

{\bf AMS subject classification:} 35Q30, 76N10

\section{Introduction}

It is well-known from physics textbooks that Euler-Poisson system 
is relevant either  {\it in the attractive case}  for  describing plasma dynamics when the compressible electron fluid interacts with a constant charged ionic background \cite{CH},
 or {\it in the repulsive case} as a model of evolution of self-gravitating gaseous stars \cite{Ch}.

The Euler-Helmholtz system has been  introduced more recently to investigate some vortex flows of compressible media \cite{CC}.
\vskip0.25cm
In the plasma context, the inertia of ions being much larger than that of electrons, a standard approximation
consists in assuming that the dynamics reduces to the motion of electrons in a constant ionic background. A simple description is given by the following barotropic Euler system:
\bFormula{i1p}
\partial_t n + \Div (n u) = 0,
\eF
\bFormula{i2p}
\partial_t (n u) + \Div (n u \otimes u) + \frac{1}{m_e}\Grad P(n)=\frac{ne}{m_e}\Grad \phi,
\eF
with initial data
\bFormula{i0p}
(n,u)(x,0)=(n_0,u_0)(x),
\eF
 where the electric potential $\phi$ satisfies the Helmholtz equation
\bFormula{i3p}
\Delta \phi-\mu^2 \phi=4\pi e (n-\overline n)\with 
|\phi|\to 0\ \hbox{ as }\ |x|\to\infty.
\eF
 Here, $n(t,x)$ is the density of electrons of charge $e$ and mass $m_e$
($t$ denotes the time variable and $x\in\R^3,$ the Eulerian spatial variable), $\mu$ is a non negative parameter, $\overline n$ is the density
 of the ionic background, $u(t,x)$ is the velocity of the electronic flow and $P(n)=A n^\gamma$ $(\gamma\geq1$) is the electronic pressure.    
\vskip0.25cm
 In the astrophysical context, as explained in e.g. \cite{Ch} \cite{Chi}, a simple evolutionary 
 model for the density $\rho = \rho (t,x)$, the velocity
field $u = u(t,x)$ and the gravitational potential $\phi=\phi(t,x)$ 
 is the following self-gravitating Euler-Helmholtz system: 
\bFormula{i1g}
\partial_t \rho + \Div (\rho u) = 0,
\eF
\bFormula{i2g}
\partial_t (\rho u) + \Div (\rho u \otimes u) + \Grad\Pi(\rho)=-\vr\Grad \phi,
\eF
\bFormula{i3g}
\Delta \phi-\mu^2\phi=4\pi \Gamma\rho,
\eF
with initial data
\bFormula{i0g}
(\rho,u)(x,0)=(\rho_0,u_0)(x),
\eF
where $\Pi(\rho)=A\rho^\gamma$ is the barotropic pressure with $A>0$ and the adiabatic exponent $\gamma>1,$ and $\Gamma$ 
is the Newton  gravitational constant.
\vskip0.25cm
In  the attractive case (that is, \eqref{i1g}, \eqref{i2g}, \eqref{i3g}), as the gravitational ingredient in the system (Newton equation) must produce a convergent potential, at least in the massless case ($\mu=0$), the density has to be decreasing at large distance and a reasonable model for a gaseous star is thus a compactly supported density materializing the domain of the star matter.
This  introduces the classical difficulty of vacuum (as first observed by Kato \cite{K}) when  symmetrizing the system. Despite this,  the corresponding Cauchy problem for Euler-Poisson with vacuum for strong solutions was solved locally in time in the eighties by various authors, among them:
 Makino  \cite{Ma,Ma2}, Makino-Ukai \cite{MU}, Makino-Pertame \cite{MP},  Gamblin \cite{ga}, B\'ezard \cite{be}, Braun and Karp  \cite{BK}
(see also \cite{Ma3} for a clear survey). 

Owing  to the fact that no dissipative process takes place in the system,
existence results are expected to be only local in time even for small data \cite{CW}
 (see blow-up results of Chemin \cite{C} (3D case) or Makino and Perthame \cite{MP} (1D spherically symmetric case)).
 However,   in a series of papers \cite{S} \cite{GS} \cite{G2}, D. Serre and M. Grassin pointed 
 out  that under a suitable   ``dispersive'' spectral 
 condition  on the initial velocity that will be specified in the next section,
 and a smallness hypothesis on the initial 
density, the compressible Euler system admits a unique global smooth solution. 
They first considered the isentropic case for a compactly supported density, then relaxed the support condition and finally extended the result to the non-isentropic case. In \cite{LM}, L\'ecurieux-Mercier 
obtained similar results for  the Van der Waals equation of state. 
\smallbreak
The main purpose of the present work is to show that the Serre-Grassin global existence result extends to  the compressible Euler system coupled with the  Poisson or Helmholtz equations, 
\emph{regardless of the sign of the coupling}. 

Compared to the pure Euler system, once the global existence problem has been solved, 
a new situation   arises regarding the study of the static solutions of the Euler-Poisson (resp. Euler-Helmholtz) system and their stability. In the gravitational Poisson case, this is a classical question
in astrophysics (see \cite{C} \cite{SP}) which is difficult to address in the general case for technical reasons (see \cite{Li} for a survey  in the 1D case).
\bigbreak
The rest of the paper is structured as follows. 
In the next section, we state our main results and give some insights on our strategy. 
In Section \ref{aux}, we establish decay estimates in Sobolev spaces first for 
the multi-dimensional Burgers equation (that is expected to provide 
an approximate solution for our system), and next for  the compressible 
Euler equation coupled with the Poisson or the Helmholtz equation.
The next section is devoted to the proofs of  the main global existence 
results, then we show the uniqueness of the solution.
Some technical results like, in particular, first and second order commutator 
estimates are proved in the appendix. 
\medbreak
Throughout the paper, $C$ denotes a harmless `constant' that may change from line
to line, and we use sometimes the notation $A\lesssim B$ to mean that 
$A\leq CB.$ The notation $A\approx B$  is used if  both $A\lesssim B$ and $B\lesssim A.$



\section{Main results}\label{main}

In order to reformulate Systems \eqref{i1p}-\eqref{i3p} and 
\eqref{i1g}-\eqref{i0g} in a unified way,  let us  consider 
the following general system 
for the density $\vr=\vr(t,x),$ velocity field $u=u(t,x)$ and potential $\phi=\phi(t,x)$:
\bFormula{i1}
\partial_t \vr + \Div (\vr u) = 0,
\eF
\bFormula{i2}
\partial_t (\vr u) + \Div (\vr u \otimes u) + \Grad p(\vr)=\kappa\vr\Grad \phi,
\eF
\bFormula{i3}
\Delta \phi-\mu^2 \phi=4\pi G\vr,
\eF
supplemented with initial data
\bFormula{i0}
(\vr,u)(0,x)=(\vr_0,u_0)(x).
\eF
That system encompasses  the previous ones: 
take  $\bar n=0,$ $\vr(t,x) = n(t,x),$    $p(\vr)=\frac{1}{m_e}P(n),$  $G=e$
and  $\kappa=e/m_e$ (resp. $\vr(t,x) = \rho(t,x),$  $p(\vr)=\Pi(\rho),$ $G=\Gamma$ 
and  $\kappa=-1$).  
By analogy with the original systems,   the case $\kappa<0$ will be named
attractive, the case $\kappa>0,$ repulsive and the case $\kappa=0,$   
pure Euler system.  For $\kappa\not=0,$ the Poisson and Helmholtz couplings
 correspond to $\mu=0$ and $\mu>0,$ respectively.  
\vskip0.25cm

Since our functional framework will force the density to tend to $0$ at infinity, the standard  
symmetrization for the compressible Euler equations is not appropriate. 
For that reason, we shall rather use the one that has been introduced by T. Makino in \cite{Ma}, 
namely,  we set 
\begin{equation}\label{eq:mak}
\rho:=\frac{2\sqrt{A\gamma}}{\gamma-1}\,\varrho^{\frac{\gamma-1}2}.
\end{equation}
After that change of unknown, System \eqref{i1}-\eqref{i0} rewrites
$$
\left\{\begin{array}{l} 
(\d_t+u\cdot\nabla)\rho+\frac{\gamma-1}2\rho\,\div u=0,\\[1.5ex]
(\d_t+u\cdot\nabla)u+\frac{\gamma-1}2\rho\,\nabla\rho=\kappa\nabla\phi,\\[1.5ex]
\Delta\phi-\mu^2\phi=\wt G\rho^{\frac2{\gamma-1}}\quad\with \wt G:= 4\pi G \Bigl(\frac{(\gamma-1)^2}{4A\gamma}\Bigr)^{\frac1{\gamma-1}}\cdotp
\end{array}\right.\leqno(MP)$$
Consider the auxiliary Cauchy problem for the $d$-dimensional
 Burgers equation:
\begin{equation}
\label{auxiso}
\partial_t  v +  v \cdot \Grad  v = 0,
\end{equation}
with initial data 
\bFormula{auxiso0}
 v(0,x)=v_0(x).
\eF
In the particular case of the 
Euler equation (that is $\kappa=0$) and under suitable spectral conditions on $Du_0,$
it is known \cite{G},\cite{GS} that \eqref{auxiso} is a good approximation  of  \eqref{i1}-\eqref{i3}
provided the density of the fluid is small enough.
The formal heuristics is that if one divides  by $\vr$, then equation \eqref{i2} becomes
\bFormula{i2bis}
\partial_t  u + u \cdot\Grad u + \frac{A\gamma}{\gamma-1}\Grad (\vr^{\gamma-1})=\kappa\Grad \phi,
\eF
and neglecting $\phi$ and $\Grad (\vr^{\gamma-1})$ in \eqref{i2bis}, we get \eqref{auxiso}.
\medbreak
In order to state our results, we need to introduce the following function  space: 
$$E^s:=\bigl\{z\in\cC(\R^d;\R^d),\; Dz\in L^\infty \andf D^2z\in H^{s-2}\bigr\}\cdotp$$
The following result  has  been first proved in \cite{G}, \cite{GS} in the case of
integer regularity exponents. Here, we extend it to  \emph{real} exponents.  
\begin{prop}
\label{p:Burgers}
Let $v_0$ be in $E^s(\R^d)$ with  $s>1+d/2$ and satisfy:
$$
\hbox{there exists }\ \varepsilon>0\ \hbox{ such that for any  }\  x\in\R^d,\ \hbox{\rm dist}(\hbox{\rm Sp}\,(Dv_0(x)),\R_-)\geq \varepsilon,
 \leqno\mathbf{(H0)}
 $$
   where  ${\rm Sp}\,A$ denotes the spectrum of the matrix $A.$ 
\medbreak
Then 
\eqref{auxiso}-\eqref{auxiso0} has a classical solution $v$ on $\R_+\times\R^d$ such that
\[
  D^2v \in \cC^j\bigl(\R_+;H^{s-2-j}(\R^d)\bigr)\ \ \ \mbox{for}\ \ j=0,1.
\]
Moreover,  $D  v\in \cC_b(\R_+\times\R^d)$ and we have for any $t\geq 0$ and  $x\in \R^d,$
\begin{equation}
\label{Du}
D  v(t,x)=(1+t)^{-1}I+(1+t)^{-2}K(t,x)
\end{equation}
for some function $K\in\cC_b(\R_+\times\R^d;\R^d\times\R^d)$ that satisfies also
\begin{equation}
\|K(t)\|_{\dot H^{\sigma}}\leq K_\sigma(1+t)^{d/2-\sigma}\ \ \mbox{for all }\ 0<\sigma\leq s-1.
\label{estim2}
\end{equation}
Finally, if $D^2v_0$ is bounded, then we have
\begin{equation}
{\displaystyle \|D^2  v(t)\|_{L^\infty}\leq C(1+t)^{-3}.}
\label{estim3}
\end{equation}
\end{prop}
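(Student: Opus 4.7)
The plan is to construct $v$ explicitly via characteristics and to read every pointwise and Sobolev estimate off the resulting closed formulas, reserving energy arguments only for continuity in time. Define the flow $X(t,x) := x + tv_0(x)$ and set $v(t,X(t,x)) := v_0(x)$. Under $(\mathbf{H0})$ and $\|Dv_0\|_{L^\infty} \leq M$, a short spectral computation gives $|1+t\lambda| \geq c(\varepsilon,M)(1+t)$ for every eigenvalue $\lambda$ of $Dv_0(x)$ and every $t\geq 0$; hence $DX = I+tDv_0$ is invertible with $\|(I+tDv_0)^{-1}\| \leq C/(1+t)$ uniformly in $x$. Therefore $X(t,\cdot)$ is a global $\cC^1$-diffeomorphism of $\R^d$ and $v$ is a bona fide classical solution of \eqref{auxiso}--\eqref{auxiso0}.

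Differentiating $v\circ X = v_0$ in $x$ yields $Dv\circ X = Dv_0(I+tDv_0)^{-1}$, and the algebraic identity $(1+t)Dv_0 - (I+tDv_0) = Dv_0 - I$ immediately produces the splitting \eqref{Du} with
\[
 K(t,X(t,x)) \;=\; (1+t)\bigl(Dv_0(x)-I\bigr)\bigl(I+tDv_0(x)\bigr)^{-1},
\]
uniformly bounded by the spectral estimate; this simultaneously proves $Dv \in \cC_b(\R_+\times\R^d)$. A second differentiation combined with the identity $D_A[A(I+tA)^{-1}](B) = (I+tA)^{-1}B(I+tA)^{-1}$ leads to the closed form
\[
 D^2 v \circ X \;=\; (I+tDv_0)^{-1}\,D^2 v_0\,(I+tDv_0)^{-2},
\]
from which \eqref{estim3} follows by three applications of the $(1+t)^{-1}$ bound on $(I+tDv_0)^{-1}$, provided $D^2 v_0$ is additionally in $L^\infty$.

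Estimate \eqref{estim2} is the delicate point, and I would attack it in two stages. First, in Lagrangian variables, set $\wt K(t,x) := K(t,X(t,x))$ and observe that $A\mapsto (1+t)(A-I)(I+tA)^{-1}$ is a smooth map whose derivatives at $A = Dv_0(x)$ are uniformly controlled in $t$ thanks to the spectral bound; composition and paralinearisation in fractional Sobolev spaces, fed with $D^2 v_0 \in H^{s-2}$, then give $\|\wt K(t,\cdot)\|_{\dot H^\sigma_x} \leq C$ uniformly in $t$ for $0<\sigma\leq s-1$. Second, pulling back through $y = X(t,x)$ with $|\det DX|\approx (1+t)^d$ and $\|(DX)^{-1}\|\approx (1+t)^{-1}$ converts each Eulerian derivative into a Lagrangian derivative at a cost of $(1+t)^{-1}$ and each $L^2_y$-integration into $L^2_x$ at a cost of $(1+t)^{d/2}$; this yields $\|K(t,\cdot)\|_{\dot H^\sigma_y}\lesssim (1+t)^{d/2-\sigma}$, which is \eqref{estim2}. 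Time continuity of $D^2 v$ with values in $H^{s-2-j}$ for $j=0,1$ is then read off the transport equation $(\partial_t + v\cdot\nabla)D^2 v = -(Dv)(D^2 v) - (D^2 v)(Dv)$ and a standard density/approximation argument.

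The chief obstacle is precisely the non-integer chain rule appearing in this pull-back step: since $X^{-1}(t,\cdot)$ is only as regular as $v_0$, estimating $\wt K\circ X^{-1}$ in $\dot H^\sigma$ for real $\sigma$ cannot rely on the direct Leibniz and chain rules used by Grassin and Serre in \cite{G,GS}; it must instead go through paradifferential commutator estimates of Kato--Ponce type. This is exactly where the first- and second-order commutator estimates announced for the appendix enter, and their availability is what allows the Serre--Grassin framework to be pushed from integer to real regularity exponents.
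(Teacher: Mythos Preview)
Your construction of $v$ via characteristics, the derivation of the formula for $K$, and the treatment of \eqref{estim3} are all correct and match the paper exactly. The divergence is in your handling of \eqref{estim2} and in your claim about where the appendix commutator lemmas enter.

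For \eqref{estim2} the paper does \emph{not} invoke any paradifferential machinery. Instead, it uses the Gagliardo finite-difference characterisation
\[
\|\wt K_t\|_{\dot H^\sigma}^2 \;\approx\; \int_{\R^d}\!\!\int_{\R^d}\frac{|\wt K_t(y)-\wt K_t(x)|^2}{|y-x|^{d+2\sigma}}\,dx\,dy,
\]
writes the difference $\wt K_t(y)-\wt K_t(x)$ as two explicit matrix expressions involving $Dv_0(X_t^{-1}(y))-Dv_0(X_t^{-1}(x))$, and then performs the change of variables $x'=X_t^{-1}(x)$, $y'=X_t^{-1}(y)$ directly inside the double integral. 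The Jacobian contributes $(1+t)^d$ per integration and the bi-Lipschitz bound on $X_t$ contributes $(1+t)^{-d-2\sigma}$ from the denominator, giving the exponent $d/2-\sigma$ in one stroke. For $\sigma>1$ one combines this with Grassin's explicit formula for $\partial^\alpha\wt K_t$. So the passage from integer to real $\sigma$ here is elementary: no paraproducts, no Kato--Ponce.

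Consequently your final paragraph misattributes the role of Lemmas \ref{l:com1} and \ref{l:com2}. Those are commutators of $\dot\Lambda^s$ with \emph{multiplication} by $v$, not with composition by a diffeomorphism; they are used only in Section~\ref{ss:sob} to control the remainder terms $\dot R_s^3$ and $\dot R_s^6$ when $\dot\Lambda^s$ is applied to the system $(BB)$. They play no part in the proof of Proposition~\ref{p:Burgers}. Your proposed route---paralinearise $F_t(Dv_0)$ in Lagrangian coordinates, then pull back---would require a separate composition-with-diffeomorphism estimate in $\dot H^\sigma$, which the appendix does not supply (and which, amusingly, is most cleanly proved via the very finite-difference argument the paper uses).
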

The main goal of the paper is to prove the following global existence and uniqueness result
for System~\eqref{i1}-\eqref{i0}. 
\begin{thm}
\label{isentro}
Let $s>1+d/2$ and $\gamma>1.$  Assume that the initial data $(\rho_0,u_0)$ satisfy:
\begin{itemize}
\item $\mathbf{(H1)}$ there exists $v_0$ in $E^{s+1}$ satisfying  $\mathbf{(H0)}$
and such that $u_0-v_0$ is small in $H^s$;
\item $\mathbf{(H2)}$ $\vr_0^{\frac{\gamma-1}{2}}$ is small enough in $H^s.$
\end{itemize}
Denote by  $ v$  the global solution of \eqref{auxiso}-\eqref{auxiso0} given by Proposition
\ref{p:Burgers}. 
\medbreak
Then, there exists a unique global solution $\left(\vr,u,\phi\right)$ to
\eqref{i1}-\eqref{i0}, such that 
\[
\bigl(\vr^{\frac{\gamma-1}{2}},u- v\bigr) \in
\cC\bigl(\R_+;H^{s}\bigl(\R^d\bigr) \bigr),
\]
provided $d,$ $\gamma$ and $s$ satisfy the following additional conditions:
\begin{itemize}
\smallbreak\item Pure Euler case $\kappa=\mu=0$:   no additional condition on $d,$ $\gamma$ 
and $s.$
\smallbreak\item Poisson case $\kappa\not=0$ and $\mu=0$: $d\geq3,$ 
$\gamma<\min(\frac53,1+\frac4{d-1})$ and $s<\frac32+\frac{2}{\gamma-1}$
(no upper bound for $s$ if $\gamma=1+\frac2k$ for some integer $k$).
\smallbreak\item Helmholtz case $\kappa\not=0$ and $\mu\not=0$: either $(d,s,\gamma)$ are
as for the Poisson case, or 
$d\geq2,$ $s<\frac12+\frac2{\gamma-1}$ and $\gamma<1+\frac4{d+1}$
 (no upper bound for $s$  if $\gamma=1+\frac2k$ for some integer $k$).
\end{itemize}
\end{thm}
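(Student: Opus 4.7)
My plan is to implement the Serre--Grassin perturbation strategy, regarding the physical velocity $u$ as a correction to the Burgers profile $v$ provided by Proposition~\ref{p:Burgers}. After applying the Makino substitution (MP) and setting $w := u - v$, the pair $(\rho, w)$ satisfies the quasilinear symmetric hyperbolic system
\begin{equation*}
\begin{cases}
(\partial_t + (v+w)\cdot\nabla)\rho + \tfrac{\gamma-1}{2}\rho\,\div(v+w) = 0,\\
(\partial_t + (v+w)\cdot\nabla)w + w\cdot\nabla v + \tfrac{\gamma-1}{2}\rho\,\nabla\rho = \kappa\nabla\phi,\\
\Delta\phi - \mu^2\phi = \wt G\,\rho^{2/(\gamma-1)},
\end{cases}
\end{equation*}
whose initial data $(\rho_0, u_0 - v_0)$ is small in $H^s$ by hypotheses $\mathbf{(H1)}$--$\mathbf{(H2)}$. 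Local existence in $\cC([0,T^*);H^s)$ is standard Kato-type theory for symmetric hyperbolic systems, once one checks that the composition $\rho\mapsto\rho^{2/(\gamma-1)}$ and the non-local coupling $\phi$ are continuous in $H^s$, which is a tame Moser estimate combined with elliptic regularity.

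The heart of the argument is to close a global-in-time a priori bound. Following Section~\ref{aux}, I would work with an energy of the form
\begin{equation*}
E(t) := \|\rho(t)\|_{H^s}^2 + \|w(t)\|_{H^s}^2
\end{equation*}
and derive a differential inequality $\frac{d}{dt}E \le A(t)E + B(t)E^{3/2} + C(t)$, in which $A(t)$ captures the linear interaction with the background $v$, $B(t)$ the genuine nonlinearities and commutators, and $C(t)$ the potential contribution. The crucial algebraic input is the identity $Dv = (1+t)^{-1}I + (1+t)^{-2}K$ from Proposition~\ref{p:Burgers}, whose isotropic scalar part can be absorbed by weighting the energy with a suitable power of $(1+t)$, while the anisotropic remainder is controlled by \eqref{estim2}. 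Combined with the first- and second-order commutator estimates relegated to the appendix, this yields algebraic decay of $\|\rho\|_{H^s}$ at a rate sufficient to make $\rho^{2/(\gamma-1)}$ time-integrable in the elliptic norms relevant for $\phi$.

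The potential contribution $\kappa\nabla\phi$ is where the restrictions on $(d,\gamma,s)$ originate. In the Poisson case $\mu=0$, one has $\nabla\phi = -\wt G\,\nabla(-\Delta)^{-1}\rho^{2/(\gamma-1)}$, and the Hardy--Littlewood--Sobolev inequality requires good $L^q$ integrability of $\rho^{2/(\gamma-1)}$: this forces $d\geq 3$ together with $\gamma<\min(5/3,1+4/(d-1))$, the latter ensuring that the resulting energy source is integrable in time. The upper bound $s<3/2+2/(\gamma-1)$ is in turn dictated by the finite smoothness of the power map $r\mapsto r^{2/(\gamma-1)}$ at the origin, which is $\cC^\infty$ only when $2/(\gamma-1)\in\N$, that is, $\gamma=1+2/k$, explaining the exceptional cases noted in the statement. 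In the Helmholtz case $\mu>0$, the screened Green function is globally integrable and $(-\Delta+\mu^2)^{-1}$ acts boundedly on $L^2$, so the integrability requirement weakens to $d\geq 2$ at the price of the tighter bound $s<1/2+2/(\gamma-1)$ reflecting the different scaling of the high-frequency estimate.

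The main technical obstacle will be to close the energy inequality in a single weighted norm capable of handling simultaneously (i) the fact that the transport by $v$ does not decay while only $Dv$ does (so that the commutators involving $v$ require the second-order commutator bounds of the appendix), (ii) the limited smoothness of $\rho\mapsto\rho^{2/(\gamma-1)}$ for non-integer exponents, which is what dictates the conditions on $s$, and (iii) the nonlocal term, whose bound in $H^s$ degenerates for small $d$ or large $\gamma$. Once the energy inequality is closed under the smallness from $\mathbf{(H1)}$--$\mathbf{(H2)}$, a standard continuation argument extends the local solution to all of $\R_+$. Uniqueness is then obtained via an $L^2$ estimate on the difference of two solutions at a regularity one less than $s$, exploiting the Lipschitz structure of the nonlinearities and the elliptic regularity for $\phi$.
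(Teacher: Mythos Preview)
Your proposal is correct and follows essentially the same route as the paper: the Makino substitution, the perturbation $w=u-v$, the time-weighted $H^s$ energy exploiting $Dv=(1+t)^{-1}I+(1+t)^{-2}K$, the first- and second-order commutator lemmas, the Friedrichs/Kato local theory, and the case analysis on the coupling term all match Sections~\ref{aux} and~4. One minor imprecision worth fixing: the potential contribution is not an autonomous source $C(t)$ but a genuine higher-order nonlinearity (of order $E^{1/(\gamma-1)}$ in your notation, coming from $\|\nabla\phi\|_{H^s}\lesssim\|\rho\|_{H^s}^{2/(\gamma-1)}$), and it is precisely the competition between this power and the damping supplied by the $(1+t)$-weight that, through the ODE lemma in the appendix, forces $\gamma<5/3$ in the Poisson case.
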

\begin{rem}\label{r:finitemass} The fact that $\varrho^{\frac{\gamma-1}2}\in \cC(\R_+;H^s(\R^d))$
implies that $\varrho\in\cC(\R_+;L^1(\R^d))$ whenever $\gamma\leq2$
(a condition which is always satisfyed in the Poisson and Helmholtz cases with $d=3$). 
Hence, the constructed solutions have finite mass, and one can show that it is conserved
through the evolution.
\end{rem}
\begin{rem} Even for the pure Euler case, our results
extend those of Grassin and Serre \cite{G2,GS,S} since we are able to treat any dimension $d\geq1$ 
and real Sobolev exponent $s>1+d/2.$ 
\end{rem}

The following decay estimates  are satisfied by  the global solutions constructed in the previous theorem (to be 
compared with those for the Burgers equation). 
\begin{thm}\label{decay}
Let all the assumptions of  Theorem \ref{isentro} be in force.  Then, for all $\sigma$ in $[0,s],$
 the solution $(\vr,u,\phi)$ constructed therein satisfies
$$
\left\|\vr^{\frac{\gamma-1}{2}},u- v\right\|_{\dot H^\sigma}\leq C_\sigma (1+t)^{\frac d2-\sigma
-\min(1,d(\frac{\gamma-1}2))},
$$
where $C_\sigma$ depends only on the initial data, on $d,$ $\gamma,$
$\mu,$  and ond $\sigma.$ 
\end{thm}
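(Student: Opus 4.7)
The target rate $(1+t)^{d/2-\sigma-\beta}$ with $\beta:=\min(1,d(\gamma-1)/2)$ matches the scaling of a self-similar profile $f(t,x)=(1+t)^{-\beta}F(x/(1+t))$, the Burgers flow $X(t,y)\simeq(1+t)y$ transporting the perturbation outwards while its amplitude decays at rate $(1+t)^{-\beta}$. The two candidates for $\beta$ have identifiable mechanisms: $(1+t)^{-1}$ is the damping of $w:=u-v$ produced by the term $w\cdot Dv\sim w/(1+t)$, while $(1+t)^{-d(\gamma-1)/2}$ is the decay of $\rho$ forced by the continuity equation via $\rho\,\div v\sim d\rho/(1+t)$; the slower mechanism dictates $\beta$. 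The plan is to establish the two endpoints $\sigma=0$ and $\sigma=s$ and obtain the full range by interpolation, using throughout the uniform $H^s$-bound on $(\rho,w)$ already provided by Theorem \ref{isentro}.

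\textbf{Perturbation system and energy identity.} Subtracting the Burgers equation from the velocity equation of $(MP)$ gives
\[
(\partial_t+u\cdot\nabla)\rho+\tfrac{\gamma-1}{2}\rho\,\div u=0,\qquad(\partial_t+u\cdot\nabla)w+w\cdot Dv+\tfrac{\gamma-1}{2}\rho\,\nabla\rho=\kappa\nabla\phi,
\]
with $(\Delta-\mu^2)\phi=\wt G\,\rho^{2/(\gamma-1)}$. Apply $\Lambda^\sigma:=(-\Delta)^{\sigma/2}$, take $L^2$ inner products against $\Lambda^\sigma\rho$ and $\Lambda^\sigma w$ with a symmetric choice of weights, and exploit the Makino cancellation $\rho^2\,\div w+2\rho\,w\cdot\nabla\rho=\div(\rho^2w)$ together with the expansion $Dv=(1+t)^{-1}I+(1+t)^{-2}K$ from Proposition \ref{p:Burgers}. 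This yields a relation of the form
\[
\tfrac12\tfrac{d}{dt}\bigl(\|\Lambda^\sigma\rho\|_{L^2}^2+2\|\Lambda^\sigma w\|_{L^2}^2\bigr)+\tfrac{m_\sigma^\rho}{1+t}\|\Lambda^\sigma\rho\|_{L^2}^2+\tfrac{m_\sigma^w}{1+t}\|\Lambda^\sigma w\|_{L^2}^2=R_\sigma(t)+2\kappa\langle\Lambda^\sigma\nabla\phi,\Lambda^\sigma w\rangle,
\]
with leading-order coefficients that depend linearly on $\sigma$ via $[\Lambda^\sigma,v\cdot\nabla]$, and which may be of either sign.

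\textbf{Remainders and the coupling term.} The remainder $R_\sigma$ collects commutators $[\Lambda^\sigma,u\cdot\nabla](\cdot)$ and zero-order contributions of the bounded error $(1+t)^{-2}K$; these are estimated via the commutator bounds from the appendix combined with the uniform $H^s$ bound on $(\rho,w)$, producing $|R_\sigma(t)|\lesssim\eta(t)\bigl(\|\Lambda^\sigma\rho\|_{L^2}^2+\|\Lambda^\sigma w\|_{L^2}^2\bigr)$ with $\eta\in L^1(\R_+)$. For the potential, use $\nabla\phi=-\wt G\,\nabla(\mu^2-\Delta)^{-1}\rho^{2/(\gamma-1)}$. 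In the Helmholtz case, $\nabla(\mu^2-\Delta)^{-1}$ is of order $-1$ and bounded on every Sobolev space, so decay for $\nabla\phi$ follows directly from a composition estimate for $\rho\mapsto\rho^{2/(\gamma-1)}$; in the Poisson case, the nonlocal multiplier $\nabla(-\Delta)^{-1}$ requires Hardy--Littlewood--Sobolev inequalities, which is precisely why the conditions $d\geq3$, $\gamma<\min(5/3,1+4/(d-1))$, $s<3/2+2/(\gamma-1)$ of Theorem \ref{isentro} are imposed.

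\textbf{Gronwall endpoints and interpolation; main obstacle.} A Gronwall argument at $\sigma=s$ gives the sharp decay $\|(\Lambda^s\rho,\Lambda^s w)\|_{L^2}\lesssim(1+t)^{-s+d/2-\beta}$; at $\sigma=0$ the same scheme yields the matching upper bound $\|(\rho,w)\|_{L^2}\lesssim(1+t)^{d/2-\beta}$, the exponent $d/2-\beta$ being the volume spreading of the Burgers flow acting against the pointwise decay $(1+t)^{-\beta}$. Interpolation $\|f\|_{\dot H^\sigma}\leq\|f\|_{L^2}^{1-\sigma/s}\,\|f\|_{\dot H^s}^{\sigma/s}$ then combines the two exponents to exactly $(1+t)^{d/2-\sigma-\beta}$. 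The critical step is the coupling $\kappa\langle\Lambda^\sigma\nabla\phi,\Lambda^\sigma w\rangle$ in the Poisson case: the decay of $\nabla\phi$ inherited through the nonlinearity $\rho^{2/(\gamma-1)}$ must exceed the time weight generated by the Gronwall integration, and this is precisely what the dimensional and regularity constraints in Theorem \ref{isentro} are designed to supply. A secondary technical issue is that the sharp values of the damping coefficients $m_\sigma^\rho$ and $m_\sigma^w$ at fractional $\sigma$ must be tracked via fractional commutator estimates rather than being absorbed into $R_\sigma$ as lower-order terms.
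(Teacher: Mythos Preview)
Your skeleton is the same as the paper's: perturbation system $(BB)$ for $(\rho,w)$, expansion $Dv=(1+t)^{-1}I+(1+t)^{-2}K$, endpoint estimates at $\sigma=0$ and $\sigma=s$, interpolation, and composition/HLS estimates for the potential term. You also correctly flag that the damping coefficient $c_{d,\gamma,\sigma}$ picks up its $\sigma$-dependence through the second-order commutator $[v,\dot\Lambda^\sigma]\nabla-\sigma\,\partial_jv^k\dot\Lambda^{\sigma-2}\partial^2_{jk}$, which is exactly Lemma~\ref{l:com2}.

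The genuine gap is your handling of the nonlinear remainder. You assert that $|R_\sigma(t)|\lesssim\eta(t)\bigl(\|\Lambda^\sigma\rho\|_{L^2}^2+\|\Lambda^\sigma w\|_{L^2}^2\bigr)$ with $\eta\in L^1(\R_+)$, obtained from ``the uniform $H^s$-bound on $(\rho,w)$ already provided by Theorem~\ref{isentro}''. But Theorem~\ref{isentro} as stated gives only $(\rho,w)\in\cC(\R_+;H^s)$, no uniform-in-time bound; and even a uniform bound would not make $\eta$ integrable. The commutators with $u=v+w$ split into a $v$-part (which, after extracting the damping, is indeed controlled by $\|D^2v\|_{L^\infty}\sim(1+t)^{-3}$ and $\|D^2v\|_{\dot H^{s-1}}$, both integrable) and a $w$-part producing factors $\|(\nabla\rho,\nabla w)\|_{L^\infty}$. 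The latter are not known to be $L^1$ in time a~priori---showing they decay like $(1+t)^{-1-\beta}$ is precisely the content of the theorem you are proving. So the Gronwall-with-$L^1$-coefficient step is circular.

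The paper does not decouple in this way. It keeps these quadratic contributions explicit: with $\dot X_\sigma:=\|(\rho,w)\|_{\dot H^\sigma}$ and the time-weighted $\dot Y_\sigma:=(1+t)^{c_{d,\gamma,\sigma}-a}\dot X_\sigma$, the endpoint inequalities combine into a single \emph{nonlinear} differential inequality for $Y_s:=(\dot Y_0^2+\dot Y_s^2)^{1/2}$ of the form
\[
\frac{d}{dt}Y_s+\frac{a}{1+t}Y_s\lesssim\frac{Y_s}{(1+t)^2}+Y_s^2+(1+t)^{m'-1}Y_s^{\frac{2}{\gamma-1}},
\]
and the closure is achieved by the bootstrap Lemma~\ref{l:ODE}, which uses the \emph{smallness} of $Y_s(0)$ (hypotheses $\mathbf{(H1)}$--$\mathbf{(H2)}$). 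In other words, Theorems~\ref{isentro} and~\ref{decay} are proved simultaneously: the decay \emph{is} the a~priori estimate that yields global existence, not a corollary of it. Your linear Gronwall step should be replaced by this nonlinear ODE argument; once you do that, the rest of your outline matches the paper.
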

  Regarding  the Poisson coupling  in the repulsive case (that is $\mu=0$ and $\kappa>0$),  our results 
 have  to be compared with the remarkable work of  Y. Guo in \cite{G}. 
 There,  having $\kappa>0$  and  $\nabla\times u_0=0$
 is fundamental, and the result holds for small perturbations of 
 the  state $(\rho,u)=(\overline\rho,0)$ with $\overline\rho$ a \emph{positive} constant. 
 He also proved that $\|\rho(t)-\overline \rho\|_{L^\infty}$ and $\|u(t)\|_{L^\infty}$ decay as $(1+t)^{-p}$ for
 any $p<3/2$ when $t\to\infty,$ which does not correspond to the decay  we  here  establish here.

 Guo's approach is  somehow orthogonal to ours
since it strongly relies on the stability properties of the linearized Euler-Poisson 
system about $(\overline\rho,0).$


\section{Decay estimates in Sobolev spaces}\label{aux}

The goal of the present section is to prove  a priori 
decay estimates in Sobolev spaces first for the multi-dimensional 
Burgers equation \eqref{auxiso} and, next, for the discrepancy between  
the solution to  (MP)  and to \eqref{auxiso}. Those estimates  will play a fundamental role 
in the proof of our global existence result.

\subsection{Decay estimates for the Burgers equation} 

The purpose of this part is to prove Proposition \ref{p:Burgers} 
for  any  real regularity  exponent $s>1+d/2.$ 
\smallbreak
Let $X$ be the flow of $v.$ 
 The proof relies on the fact that the matrix valued function $A:(t,y)\mapsto Dv(t,X(t,y))$ 
  satisfies the Ricatti equation
  $$A'+ A^2=0,\qquad A|_{t=0}=Dv_0.$$
{}From Hypothesis $\mathbf{(H0)},$
one can deduce that $v(t,y)$ is defined for all $t\geq0$ and $y\in\R^d,$ and that 
$$Dv(t,X(t,y))= (\Id+t Dv_0(y))^{-1} Dv_0(y)\with  X(t,y)=y+tv_0(y).$$
Therefore, denoting $X_t:y\mapsto X(t,y),$ we  have
\begin{multline}\label{eq:Dv} Dv(t,x)=\frac{\Id}{1+t}+\frac{K(t,x)}{(1+t)^2}
\\ \with K(t,x):=(1+t)(\Id+tDv_0(X_t^{-1}(x)))^{-1}(Dv_0(X_t^{-1}(x))-\Id).\end{multline}
In particular, we have 
\begin{equation}\label{eq:divv}
\div v(t,y+tv_0(y))=\frac d{1+t}+\frac{{\rm Tr}\,K(t,y+tv_0(y))}{(1+t)^2}\cdotp
\end{equation}
Furthermore,   Hypothesis $\mathbf{(H0)}$ implies that
\begin{equation}\label{eq:DXbounded}
\|(\Id + tDv_0)^{-1}\|_{L^\infty}\lesssim (1+\ep t)^{-1},\end{equation}
and  $K$ is thus bounded on $\R_+\times\R^d.$
\medbreak
For the proof of \eqref{estim3} we refer to \cite{G2}. We proceed with the proof of \eqref{estim2} in the case $\sigma\in]0,1[$
(for the integer case, see \cite{G2}).  
To bound $\wt K_t:=(1+t)^{-1} K(t,\cdot)$ in $\dot H^\sigma,$ 
 we use the following characterization of Sobolev norms by finite differences:
 $$
 \|\wt K_t\|_{\dot H^\sigma}^2 \approx \int_{\R^d}\!\!\int_{\R^d}
 \frac{|\wt K_t(y)-\wt K_t(x)|^2}{{|y-x|}^{d+2\sigma}}\,dx\,dy
 $$
 and the fact that $ \wt K_t(y)-\wt K_t(x)= I^1_t(x,y)+I^2_t(x,y)$ with 

 $$\begin{aligned}
I_t^1(x,y)&= (\Id\!+\!tDv_0(X_t^{-1}(y)))^{-1}\bigl(Dv_0(X_t^{-1}(y))-Dv_0(X_t^{-1}(x))\bigr),\\
I_t^2(x,y)&=t(\Id\!+\!tDv_0(X_t^{-1}(y)))^{-1}\!\bigl(Dv_0(X_t^{-1}(x))-
\!Dv_0(X_t^{-1}(y))\bigr) \\ &\qquad \times (\Id\!+\!tDv_0(X_t^{-1}(x)))^{-1} (Dv_0(X_t^{-1}(x))-\Id).\end{aligned}$$ 
We see, thanks to \eqref{eq:DXbounded} and to the
change of variable $x'=X_t^{-1}(x)$ and $y'=X_t^{-1}(y),$ that  
$$
\int_{\R^d}\!\!\int_{\R^d}
 \frac{|I_t^1(x,y)|^2}{{|y-x|}^{d+2\sigma}}\,dx\,dy\leq\frac C{(1+\ep t)^2}
 \int_{\R^d}\!\!\int_{\R^d}\frac{|Dv_0(y')-Dv_0(x')|^2}{|X_t(y')-X_t(x')|^{d+2\sigma}}
 J_{X_t}(x') J_{X_t}(y') \,dx'\,dy'.
 $$
Therefore, using the fact that $\|J_{X_t}\|_{L^\infty}\leq C(1+\ep t)^d$ and that
$$
\begin{aligned}
|y'-x'|=|X_t^{-1}(X_t(y'))-X_t^{-1}(X_t(x'))|&\leq \|DX_t^{-1}\|_{L^\infty} |X_t(y')-X_t(x')|\\
&\leq \frac C{1+\ep t}  |X_t(y')-X_t(x')|,
\end{aligned}
$$
we get that 
\begin{equation}\label{eq:It1}
\int_{\R^d}\!\!\int_{\R^d}
 \frac{|I_t^1(x,y)|^2}{{|y-x|}^{d+2\sigma}}\,dx\,dy\leq  C(1+\ep t)^{d-2-2\sigma}
 \int_{\R^d}\!\!\int_{\R^d}
 \frac{|Dv_0(y')-Dv_0(x')|^2}{{|y'-x'|}^{d+2\sigma}}\,dx\,dy.\end{equation}
 Similarly, \eqref{eq:DXbounded} and the change of variable $x'=X_t^{-1}(x)$ and $y'=X_t^{-1}(y)$
 imply that 
 $$
\int_{\R^d}\!\!\int_{\R^d}
 \frac{|I_t^2(x,y)|^2}{{|y-x|}^{d+2\sigma}}\,dx\,dy\leq\frac{Ct^2}{(1+\ep t)^4}
  \int_{\R^d}\!\!\int_{\R^d}\frac{|Dv_0(y')-Dv_0(x')|^2}{|X_t(y')-X_t(x')|^{d+2\sigma}}
 J_{X_t}(x') J_{X_t(y')} \,dx'\,dy',
 $$ and we thus also have \eqref{eq:It1} for $I_t^2.$  As a conclusion, using
 the characterization of $\|Dv_0\|_{\dot H^\sigma}$ by finite difference, we get
 $$\|\wt K_t\|_{\dot H^\sigma}\leq C(1+\ep t)^{\frac d2-1-\sigma} \|Dv_0\|_{\dot H^\sigma},
 $$
 which gives the desired estimate for $\sigma\in]0,1[.$ 
 \medbreak
 Proving the result for higher order regularity exponents may be done by 
 taking advantage of the explicit formula for partial derivatives of $\wt K_t$
  that has been derived by M.~Grassin
 in \cite[p. 1404]{G2}. The same method as in the case $\sigma\in]0,1[$ 
 has to be applied to each term of the formula. The details are left to the reader. 
 \qed


\subsection{Sobolev estimates for System~(MP)}\label{ss:sob}

Let  $v$ be the solution of the Burgers equation given by Proposition \ref{p:Burgers}. 
Let us set   $w:=u-v$ where $(\rho,u,\phi)$ stands for a sufficiently smooth solution of $(MP)$ on $[0,T]\times\R^d.$ 
Then $(\rho,w,\phi)$ satisfies:
$$
\left\{\begin{array}{l} 
(\d_t+w\cdot\nabla)\rho+\frac{\gamma-1}2\rho\,\div w+v\cdot\nabla\rho+\frac{\gamma-1}2\rho\,\div v=0,\\[1.5ex]
(\d_t+w\cdot\nabla)w+\frac{\gamma-1}2\rho\,\nabla\rho+v\cdot\nabla w+w\cdot\nabla v=\kappa\nabla\phi,\\[1.5ex]
\Delta\phi-\mu^2\phi=\wt G\rho^{\frac2{\gamma-1}}.\end{array}\right.\leqno(BB)
$$
Our aim is to prove decay estimates in $\dot H^\sigma$ for $(BB),$ for all $0\leq\sigma\leq s.$
Clearly, arguing by interpolation, it suffices to consider the border cases $\sigma=0$ and $\sigma=s.$
\medbreak
Let us start with  $\sigma=0.$  Taking the $L^2$ 
scalar product of the first two equations of  $(BB)$ with $(\rho,w)$ gives
\begin{multline}\label{eq:L20}
\frac12\frac d{dt}\|(\rho,w)\|_{L^2}^2-\frac12\int_{\R^d}(\rho^2+|w|^2)\div v\,dx
-\frac12\int_{\R^d}(\rho^2+|w|^2)\div w\,dx\\+\frac{\gamma-1}2\int_{\R^d}\rho^2\div v\,dx
+\int_{\R^d}(w\cdot\nabla v)\cdot w\,dx +\frac{\gamma-1}4 \int_{\R^d} \rho^2 \div w\, dx=\kappa\int_{\R^d}\nabla\phi\cdot w\,dx.
\end{multline}
Let 
\begin{equation}\label{eq:cdg} c_{d,\gamma}:= \min\left(1,d\frac{\gamma-1}2\right)-\frac d2\cdotp\end{equation}
 {}From \eqref{eq:L20}, Cauchy-Schwarz inequality and \eqref{eq:divv}, we deduce that, denoting by $M$ a bound of $K,$ 
\begin{multline}\label{eq:L2}
\frac12\frac d{dt}\|(\rho,w)\|_{L^2}^2+\frac{c_{d,\gamma}}{1+t}\|(\rho,w)\|_{L^2}^2\leq
|\kappa|\|\nabla\phi\|_{L^2}\|w\|_{L^2}\\+\frac{M\max(1+\frac d2,|\gamma-2|\frac d2)}{(1+t)^2}\|(\rho,w)\|_{L^2}^2+\max\left(\frac12,\frac{|\gamma-3|}4\right)\|\div w\|_{L^\infty}\|(\rho,w)\|_{L^2}^2.
\end{multline}
Even if  $\kappa=0$ (i.e. standard compressible Euler equation), proving existence results for $(BB)$ 
requires a control on $\|\div w\|_{L^\infty}.$ Owing to the hyperbolicity of the system, 
it seems (at least in the multi-dimensional case) difficult to go beyond the energy framework, 
and it is thus natural to look for a priori estimates in  Sobolev spaces $H^s.$
Now, owing to Sobolev embedding, the minimal requirement to get eventually a bound on 
 $\|\div w\|_{L^\infty}$ is that $s>1+d/2.$ 
  
In order to prove Sobolev estimates, we introduce  the homogeneous fractional derivation operator 
 $\dot\Lambda^s$  defined by  $\cF(\dot\Lambda^sf)(\xi):=|\xi|^s\cF f(\xi)$ and observe that  
 $\rho_s:=\dot\Lambda^s\rho,$ $w_s:=\dot\Lambda^sw$ and $\phi_s:=\dot\Lambda^s\phi$ satisfy
(with the usual summation convention over repeated indices)
$$
\left\{\begin{array}{l} 
(\d_t+w\cdot\nabla)\rho_s+\frac{\gamma-1}2\rho\,\div w_s+v\cdot\nabla\rho_s-s\d_jv^k\dot\Lambda^{-2}\d^2_{jk}\rho_s
+\frac{\gamma-1}2\dot\Lambda^s(\rho\,\div v)\\ 
\hspace{9.5cm}=\dot R^1_s+\dot R^2_s+\dot R^3_s,\\[1.5ex]
(\d_t+w\cdot\nabla)w_s+\frac{\gamma-1}2\rho\nabla\rho_s+v\cdot\nabla w_s-s\d_jv^k\dot\Lambda^{-2}\d^2_{jk}w_s+\dot\Lambda^s(w\cdot\nabla v)\\
\hspace{8cm}=\dot R^4_s+\dot R^5_s+\dot R^6_s+\kappa\nabla\phi_s,\\[1.5ex]
\Delta\phi_s-m^2\phi_s=\wt G\dot\Lambda^s\bigl(\rho^{\frac2{\gamma-1}}\bigr),\end{array}\right.\leqno(BB_s)
$$
with 
$$\begin{array}{ll}
\dot R^1_s:=[w,\dot\Lambda^s]\nabla\rho,\quad & \dot R^4_s:=[w,\dot\Lambda^s]\nabla w,\\[1ex]
\dot R^2_s:= \frac{\gamma-1}2[\rho,\dot\Lambda^s]\div w,\quad&\dot R^5_s:=\frac{\gamma-1}2[\rho,\dot\Lambda^s]\nabla\rho,\\[1ex]
\dot R^3_s:=[v,\dot\Lambda^s]\nabla\rho-s\d_jv^k\dot\Lambda^{-2}\d^2_{jk}\rho_s,\quad&
\dot R^6_s:=[v,\dot\Lambda^s]\nabla w-s\d_jv^k\dot\Lambda^{-2}\d^2_{jk}w_s.
\end{array}
$$
The definition of $\dot R^3_s$ and $\dot R^6_s$ is motivated
by the fact that, according to  the classical  theory of pseudo-differential operators, we  expect to have 
$$
[\dot \Lambda^s,v]\cdot\nabla z=\frac1i\bigl\{|\xi|^s,v(x)\bigr\}(D)\nabla z +\hbox{remainder}.
$$
Computing  the \emph{Poisson bracket}  in the right-hand side  yields  
 $$
 \frac1i\bigl\{|\xi|^s,v(x)\bigr\}(D) = -s \d_j v \dot\Lambda^{s-2} \d_j.
 $$
 Now, taking advantage of \eqref{eq:Dv}, we get 
$$-\d_jv^k\dot\Lambda^{-2}\d^2_{jk}z=\frac1{1+t}z-\frac{K_{kj}}{(1+t)^2}\dot\Lambda^{-2}\d^2_{jk}z,
$$
and using \eqref{eq:divv} yields
$$\begin{aligned}
\dot\Lambda^s(\rho\,\div v)&=\frac d{1+t}\rho_s+\frac1{(1+t)^2}\dot\Lambda^s(\rho\,{\rm Tr}\, K)\\
\andf \dot \Lambda^s(w\cdot\nabla v)&=\frac1{1+t}w_s+\frac1{(1+t)^2}\dot\Lambda^s(K\cdot w).\end{aligned}$$
Hence, taking the $L^2$ inner product of $(BB_s)$ with $(\rho_s,w_s),$  and denoting 
\begin{equation}\label{eq:cdgs}c_{d,\gamma,s}:= c_{d,\gamma}+s,\end{equation}
 we discover that 
\begin{multline}\label{eq:Hs}
\frac12\frac d{dt}\|(\rho_s,w_s)\|_{L^2}^2+\frac{c_{d,\gamma,s}}{1+t}\|(\rho_s,w_s)\|_{L^2}^2\leq
|\kappa|\|\nabla\phi_s\|_{L^2}\|w_s\|_{L^2}\!\\+\!\frac{\|\div
  w\|_{L^\infty}}2\|(\rho_s,w_s)\|_{L^2}^2+\frac{\gamma-1}2\|\nabla \rho\|_{L^\infty}\|\rho_s\|_{L^2}\|w_s\|_{L^2}
+\frac{sM}{(1+t)^2} \|(\rho_s,w_s)\|_{L^2}^2\\+\biggl(\frac{1}{(1\!+\!t)^2}\left(\frac{\gamma-1}2\|\dot\Lambda^s(\rho{\rm Tr}\, K)\|_{L^2}+\|\dot\Lambda^s(K\cdot w)\|_{L^2}\right) 
+\sum_{j=1}^6\|\dot R_s^j\|_{L^2}\biggr)\|(\rho_s,w_s)\|_{L^2}.
\end{multline}
The  terms $\dot R_s^1,$ $\dot R_s^2,$ $\dot R_s^4$ and $\dot R_s^5$  may be treated according to the homogeneous  version  of Kato and Ponce commutator estimates (see Lemma \ref{l:com1} in  the Appendix). 
We get
$$\begin{aligned}
\|\dot R^1_s\|_{L^2}&\lesssim \|\nabla\rho\|_{L^\infty}\|\nabla w\|_{\dot H^{s-1}}+\|\nabla w\|_{L^\infty}\|\rho\|_{\dot H^{s}},\\
\|\dot R_s^4\|_{L^2}&\lesssim \|\nabla w\|_{L^\infty}\|w\|_{\dot H^{s}},\\
\|\dot R_s^2\|_{L^2}&\lesssim \|\div w\|_{L^\infty}\|\rho\|_{\dot H^{s}}+\|\nabla \rho\|_{L^\infty}\|\div w\|_{\dot H^{s-1}},\\
\|\dot R_s^5\|_{L^2}&\lesssim \|\nabla\rho\|_{L^\infty}\|\rho\|_{\dot H^{s}}.\end{aligned}$$
The (more involved) terms $\dot R_s^3$ and $\dot R_s^6$  may be handled thanks to  Lemma~\ref{l:com2}.
We get
$$\begin{aligned}\|\dot R_s^3\|_{L^2}
&\lesssim \|\nabla\rho\|_{L^\infty}\|v\|_{\dot H^{s}}+\|\nabla^2v\|_{L^\infty}\|\nabla\rho\|_{\dot H^{s-2}},\\
\|\dot R_s^6\|_{L^2}&\lesssim \|\nabla w\|_{L^\infty}\|v\|_{\dot H^{s}}+\|\nabla^2v\|_{L^\infty}\|\nabla w\|_{\dot H^{s-2}}.\end{aligned}$$
Finally, the standard Sobolev tame estimate yields 
$$\begin{aligned}
\|\dot\Lambda^s(\rho{\rm Tr}\, K)\|_{L^2}
&\lesssim\|\rho\|_{L^\infty}\|\nabla({\rm Tr}\, K)\|_{\dot H^{s-1}}+\|{\rm Tr}\, K\|_{L^\infty}\|\rho\|_{\dot H^s}\\
\andf \|\dot\Lambda^s(K\cdot w)\|_{L^2}&\lesssim\|w\|_{L^\infty}\|\nabla K\|_{\dot H^{s-1}}+\|K\|_{L^\infty}\|w\|_{\dot H^s}.\end{aligned}
$$
Plugging all the above estimates in \eqref{eq:Hs} and using Proposition \ref{p:Burgers},   we end up with 
\begin{multline}\label{eq:Hs2}
\frac12\frac d{dt}\|(\rho,w)\|_{\dot H^s}^2+\frac{c_{d,\gamma,s}}{1+t}\|(\rho,w)\|_{\dot H^s}^2\leq
|\kappa|\|\nabla\phi\|_{\dot H^s}\|w\|_{\dot H^s}\\+\frac{CM}{(1+t)^2}\|(\rho,w)\|_{\dot H^s}^2
+C\|D^2v\|_{\dot H^{s-1}}\|(\rho,w)\|_{L^\infty}\|(\rho,w)\|_{\dot H^s}
\\+\|(\nabla\rho,\nabla w)\|_{L^\infty}\|(\rho,w)\|_{\dot H^s}^2+\|\nabla^2v\|_{L^\infty}\|(\rho,w)\|_{\dot H^{s-1}}
\|(\rho,w)\|_{\dot H^s}.\end{multline}
Let us introduce the  notation
$$\dot X_\sigma:=\|(\rho,w)\|_{\dot H^\sigma}\andf
X_\sigma:=\sqrt{\dot X_0^2+\dot X_\sigma^2}\approx \|(\rho,w)\|_{H^\sigma}
 \quad\hbox{for }\ \sigma\geq0.$$
Our aim is to bound the right-hand side of \eqref{eq:L2} and  \eqref{eq:Hs2} in terms of $\dot X_0$ and $\dot X_s$ only. 

\subsubsection*{The standard compressible Euler equations ($\kappa=\mu=0$)}
Arguing by interpolation, we get 
\begin{eqnarray}\label{eq:interpo1}
\|(\rho,w)\|_{L^\infty}&\!\!\!\lesssim\!\!\!& \dot X_0^{1-\frac d{2s}}\dot X_s^{\frac d{2s}},\\\label{eq:interpo2}
\|(D\rho,Dw)\|_{L^\infty}&\!\!\!\lesssim\!\!\!& \dot X_0^{1-\frac1s(\frac d2+1)}\dot X_s^{\frac 1s(\frac d2+1)},\\\label{eq:interpo3}
\|(\rho,w)\|_{\dot H^{s-1}}&\!\!\!\lesssim\!\!\!& \dot X_0^{\frac1s}\dot X_s^{1-\frac1s}.
\end{eqnarray}
Then, plugging these inequalities and those of Proposition \ref{p:Burgers} in \eqref{eq:L2} and \eqref{eq:Hs2} yields
$$
\begin{aligned}
\frac d{dt}\dot X_0+\frac{c_{d,\gamma}}{1+t}\dot X_0&\lesssim \frac{\dot X_0}{(1+t)^2}+\dot X_0^{2-\frac1s(\frac d2+1)}\dot X_s^{\frac1s(\frac d2+1)},\\[1ex]
\frac d{dt}\dot X_s+\frac{c_{d,\gamma,s}}{1+t}\dot X_s&\lesssim \frac{\dot X_s}{(1+t)^2}+\frac{\dot X_0^{1-\frac d{2s}}\dot X_s^{\frac d{2s}}}{(1+t)^{s+2-\frac d2}}
+\dot X_0^{1-\frac 1s(\frac d2+1)}\dot X_s^{1+\frac1s(\frac d2+1)}+\frac{\dot X_0^{\frac1s}\dot X_s^{1-\frac1s}}{(1+t)^3}\cdotp
\end{aligned}
$$

Since we expect to have  $\dot X_\sigma\lesssim (1+t)^{-c_{d,\gamma,\sigma}}$ for all $\sigma\in[0,s],$
it is natural  to introduce the function $\dot Y_\sigma:=(1+t)^{c_{d,\gamma,\sigma}}\dot X_\sigma.$
However, for technical reasons, we proceed as in \cite{G2} and  work with 
\begin{equation}\label{eq:Y}
\dot Y_\sigma:=(1+t)^{c_{d,\gamma,\sigma}-a}\dot X_\sigma\quad\hbox{for some }\ a>1.\end{equation}
Then, observing that
$$
\frac d{dt}\dot Y_\sigma +\frac a{1+t}\dot Y_\sigma
=(1+t)^{c_{d,\gamma,\sigma}-a}\biggl(\frac d{dt}\dot X_\sigma+\frac{c_{d,\gamma,\sigma}}{1+t}\dot X_\sigma\biggr),
$$
the above inequalities for $\dot X_0$ and $\dot X_s$ lead us to
$$
\begin{aligned}
\frac d{dt}\dot Y_0+\frac{a}{1+t}\dot Y_0&\lesssim \frac{\dot Y_0}{(1+t)^2}
+\frac{\dot Y_0^{2-\frac1s(\frac d2+1)}\dot Y_s^{\frac1s(\frac d2+1)}}{(1+t)^{1+\frac d2+c_{d,\gamma}-a}},\\[1ex]
\frac d{dt}\dot Y_s+\frac{a}{1+t}\dot Y_s&\lesssim \frac{\dot Y_s}{(1+t)^2}+\frac{\dot Y_0^{1-\frac d{2s}}\dot Y_s^{\frac d{2s}}}{(1+t)^{2}}
+\frac{\dot Y_0^{1-\frac 1s(\frac d2+1)}\dot Y_s^{1+\frac1s(\frac d2+1)}}{(1+t)^{1+\frac d2+c_{d,\gamma}-a}}
+\frac{\dot Y_0^{\frac1s}\dot Y_s^{1-\frac1s}}{(1+t)^2},
\end{aligned}
$$
whence, introducing the notation $Y_\sigma:=\sqrt{\dot Y_0^2+\dot Y_\sigma^2},$
$$
\frac d{dt}Y_s+\frac a{1+t}Y_s\lesssim\frac{Y_s}{(1+t)^2}+\frac{Y_s^2}{(1+t)^{1+\frac d2+c_{d,\gamma}-a}}\cdotp
$$
At this stage, one can take $a=1+\frac d2+c_{d,\gamma}$ so that $a>1$ is satisfied as soon as $\gamma>1.$
Then, we eventually get the differential inequality 
$$
\frac d{dt}Y_s+\frac a{1+t}Y_s\lesssim\frac{Y_s}{(1+t)^2}+Y_s^2,
$$
from which one can get a global control of $Y_s$ \emph{without any restriction on 
$\gamma>1$ and $s>\frac d2+1$}, see Proposition 4 in \cite{G2}, or 
Lemma \ref{l:ODE} below\footnote{Having $\gamma=1$ would 
force us to take $a=1$ in Lemma \ref{l:ODE}, in which case the expected inequality 
unfortunately fails.}. 

\subsubsection*{Poisson coupling ($\kappa\not=0$ and $\mu=0$)}
Compared to the previous paragraph, one  has to bound the additional term $\nabla\phi$ in $H^s$ or, 
equivalently, 
 $\nabla(-\Delta)^{-1}(\rho^{\frac{2}{\gamma-1}})$ in $L^2$ and in $\dot H^s.$

To bound the $L^2$ norm,  we will have to assume that $d\geq3.$ Then, by virtue of the Sobolev embedding  
$L^p\hookrightarrow \dot H^{-1}$ with $\frac dp=1+\frac d2,$ we get
$$
  \|\nabla(-\Delta)^{-1}(\rho^{\frac{2}{\gamma-1}})\|_{L^2}\lesssim
  \|\rho^{\frac{2}{\gamma-1}}\|_{L^p}\lesssim \|\rho\|_{L^q}^{\frac2{\gamma-1}}\with
  q=\frac{2p}{\gamma-1}=\frac{4d}{(\gamma-1)(d+2)}\cdotp$$ 
   Bounding the $L^q$ norm  from the Sobolev norm $H^s$ requires $q\geq2,$ whence the constraint
\begin{equation}\label{eq:g1}1<\gamma\leq\frac{3d+2}{d+2}\cdotp\end{equation}
Then, remembering the definition of $q,$ one may argue by 
interpolation as follows:
$$
  \|\nabla(-\Delta)^{-1}(\rho^{\frac{2}{\gamma-1}})\|_{L^2}\lesssim
  \Bigl(\|\rho\|_{L^2}^{1-\theta}\|\rho\|_{\dot H^s}^\theta\Bigr)^{\frac2{\gamma-1}}
  \with \theta=\frac1s\biggl(\frac d2-\frac{(\gamma-1)(d+2)}4\biggr)\cdotp
  $$
Using the definition of $\theta,$ this eventually leads to 
  $$
\frac d{dt}\dot Y_0+\frac{a}{1+t}\dot Y_0\lesssim \frac{\dot Y_0}{(1+t)^2}
+\frac{\dot Y_0^{2-\frac1s(\frac d2+1)}\dot Y_s^{\frac1s(\frac d2+1)}}{(1+t)^{1+\frac d2+c_{d,\gamma}-a}}
+\frac{(\dot Y_0^{1-\theta}\dot Y_s^\theta)^{\frac2{\gamma-1}}}{(1+t)^{(c_{d,\gamma}+\frac d2-a)(\frac2{\gamma-1}-1)-1}}\cdotp
$$
Bounding the  $\dot H^s$ norm of the potential term relies on   Lemma \ref{l:compo} in the Appendix, 
from which we get if $\frac2{\gamma-1}\geq1,$ taking $z=\rho,$ $\alpha=\frac2{\gamma-1}$ and $\sigma=s-1,$  
\begin{equation}\label{eq:gammas}\|\nabla(-\Delta)^{-1}\rho^{\frac2{\gamma-1}}\|_{\dot H^s}\leq\|\rho^{\frac2{\gamma-1}}\|_{\dot H^{s-1}}\lesssim \|\rho\|_{L^\infty}^{\frac2{\gamma-1}-1}\|\rho\|_{\dot H^{s-1}}
 \quad\hbox{if}\quad 0\leq s-1<\frac2{\gamma-1}+\frac12\cdotp\end{equation}
 Since we need   $s>1+\frac d2,$ Inequality \eqref{eq:gammas} yields the 
following additional  restrictions on~$\gamma$:
\begin{equation}\label{eq:gammaH}
1<\gamma\leq 3\andf \gamma <1+\frac 4{d-1}\cdotp
\end{equation}
{}From that point, arguing as in the previous paragraph, we obtain that
$$\displaylines{
\frac d{dt}\dot Y_s+\frac{a}{1+t}\dot Y_s\lesssim \frac{\dot Y_s}{(1+t)^2}+\frac{\dot Y_0^{1-\frac d{2s}}\dot Y_s^{\frac d{2s}}}{(1+t)^{2}}
+\frac{\dot Y_0^{1-\frac 1s(\frac d2+1)}\dot Y_s^{1+\frac1s(\frac d2+1)}}{(1+t)^{1+\frac d2+c_{d,\gamma}-a}}
\hfill\cr\hfill+\frac{\dot Y_0^{\frac1s}\dot Y_s^{1-\frac1s}}{(1+t)^2}
+\frac{(\dot Y_0^{1-\frac d{2s}}\dot Y_s^{\frac d{2s}})^{\frac2{\gamma-1}-1}\dot Y_0^{\frac1s}\dot Y_s^{1-\frac1s}}{(1+t)^{(c_{d,\gamma}+\frac d2-a)(\frac2{\gamma-1}-1)-1}}\cdotp}$$
      Therefore, still denoting $Y_s:=\sqrt{\dot Y_0^2+\dot Y_s^2},$  we get
\begin{equation}\label{eq:YsP}
\frac d{dt}Y_s+\frac a{1+t}Y_s\lesssim\frac{Y_s}{(1+t)^2}+\frac{Y_s^2}{(1+t)^{1+\frac d2+c_{d,\gamma}-a}}
+\frac{Y_s^{\frac2{\gamma-1}}}{(1+t)^{(c_{d,\gamma}+\frac d2-a)(\frac2{\gamma-1}-1)-1}}\cdotp
\end{equation}  
At this stage,  one may 
apply  Lemma \ref{l:ODE} with 
$$m=\frac2{\gamma-1}-1\andf m'=2+\biggl(\frac2{\gamma-1}-1\biggr)\biggl(a-c_{d,\gamma}-\frac d2\biggr)
$$
and eventually get, provided $\|(\rho_0,w_0)\|_{H^s}$ is small enough:
\begin{equation}\label{eq:poissonHs}
\sqrt{(1+t)^{2s}\|(\rho,w)\|_{\dot H^s}^2+\|(\rho,w)\|_{L^2}^2}\leq 2\frac{e^{\frac{Ct}{1+t}}}{(1+t)^{c_{d,\gamma}}} \|(\rho_0,w_0)\|_{H^s}.
\end{equation}
Let us emphasize that in order to apply Lemma \ref{l:ODE}, we need  $a>1$ and 
$m'<ma,$ the second condition being equivalent to
$$
2+\biggl(\frac2{\gamma-1}-1\biggr)\biggl(a-c_{d,\gamma}-\frac d2\biggr)<a\biggl(\frac2{\gamma-1}-1\biggr),
$$
that is to say
$$
\min\biggl(1,d\Bigl(\frac{\gamma-1}2\Bigr)\biggr)\biggl(\frac2{\gamma-1}-1\biggr)>2.
$$
For  $d\geq3,$  that latter inequality is equivalent to $\gamma<5/3$ (and for $d\leq 2,$ it is never satisfied).
Keeping in mind the constraints \eqref{eq:g1},  \eqref{eq:gammas} and  \eqref{eq:gammaH}, 
one can conclude that \eqref{eq:poissonHs} holds true whenever 
\begin{equation}\label{eq:condP}1<\gamma<\min\biggl(\frac 53,1+\frac 4{d-1}\biggr),\quad 1+\frac d2<s<\frac32+\frac2{\gamma-1}\andf d\geq3.\end{equation}
Of course, \eqref{eq:poissonHs} is valid for all $s>1+\frac d2$ in the case where  $\frac2{\gamma-1}$ is an integer, 
that is to say, if $\gamma=1+\frac2k$ for some integer $k>\max(3,\frac d2)\cdotp$

\subsubsection*{Helmholtz coupling}
Let us finally consider  the case  $\mu>0$ and $\kappa\not=0.$
Since $\mu>0,$ we have on the one hand,
if  $\frac2{\gamma-1}-1\geq0$ (that is $\gamma\leq3$), 
$$\|\nabla(\mu^2-\Delta)^{-1}\rho^{\frac2{\gamma-1}}\|_{L^2}\lesssim
 \|\nabla(\rho^{\frac{2}{\gamma-1}})\|_{L^2}\lesssim\|\rho\|_{L^\infty}^{\frac2{\gamma-1}-1}\|\nabla\rho\|_{L^2}$$
and, on the other hand, as may be seen after decomposing into small and large $\xi$'s 
in the definition  of  the norm in $\dot H^s,$ 
$$\|\nabla(\mu^2-\Delta)^{-1}\rho^{\frac2{\gamma-1}}\|_{\dot H^s}\lesssim \|\rho^{\frac2{\gamma-1}}\|_{\dot H^{s-1}}.$$
The right-hand side may be bounded according to \eqref{eq:gammas}. 
Hence, arguing as in  the case $m=0,$  we end up with
$$\begin{aligned}
\frac d{dt}\dot Y_0+\frac{a}{1+t}\dot Y_0&\lesssim \frac{\dot Y_0}{(1+t)^2}
+\frac{\dot Y_0^{2-\frac1s(\frac d2+1)}\dot Y_s^{\frac1s(\frac d2+1)}}{(1+t)^{1+\frac d2+c_{d,\gamma}-a}}
+\frac{\bigl(\dot Y_0^{1-\frac d{2s}}\dot Y_s^{\frac d{2s}}\bigr)^{\frac2{\gamma-1}-1}
\dot Y_0^{1-\frac1s}\dot Y_s^{\frac1s}}{(1+t)^{(\frac d2+c_{d,\gamma}-a)(\frac2{\gamma-1}-1)+1}},\\[1ex]
\frac d{dt}\dot Y_s+\frac{a}{1+t}\dot Y_s&\lesssim \frac{\dot Y_s}{(1+t)^2}+\frac{\dot Y_0^{1-\frac d{2s}}\dot Y_s^{\frac d{2s}}}{(1+t)^{2}}
+\frac{\dot Y_0^{1-\frac 1s(\frac d2+1)}\dot Y_s^{1+\frac1s(\frac d2+1)}}{(1+t)^{1+\frac d2+c_{d,\gamma}-a}}
+\frac{\dot Y_0^{\frac1s}\dot Y_s^{1-\frac1s}}{(1+t)^2}\\
&\hspace{5cm}+\frac{\bigl(\dot Y_0^{1-\frac d{2s}}\dot Y_s^{\frac d{2s}}\bigr)^{\frac2{\gamma-1}-1}
\dot Y_0^{\frac1s}\dot Y_s^{1-\frac1s}}{(1+t)^{(\frac d2+c_{d,\gamma}-a)(\frac2{\gamma-1}-1)-1}}\cdotp
\end{aligned}
$$
As expected, the estimate for $\dot Y_0$ is ``better'' than in the Poisson case and, at this stage, one does not
have any constraint on the dimension. Unfortunately, this is not of much help since the inequality for $\dot Y_s$ is the same as before, 
 leading us again   to 
\begin{equation}\label{eq:YsH}
\frac d{dt}Y_s+\frac a{1+t}Y_s\lesssim\frac{Y_s}{(1+t)^2}+\frac{Y_s^2}{(1+t)^{1+\frac d2+c_{d,\gamma}-a}}
+\frac{Y_s^{\frac2{\gamma-1}}}{(1+t)^{(\frac d2+c_{d,\gamma}-a)(\frac2{\gamma-1}-1)-1}}
\end{equation}
and thus to \eqref{eq:poissonHs} under condition \eqref{eq:condP}. 
\medbreak
To handle the case $d=2,$ one may use the fact that we also have
$$\|\nabla(\mu^2-\Delta)^{-1}\rho^{\frac2{\gamma-1}}\|_{\dot H^s}\lesssim \|\rho^{\frac2{\gamma-1}}\|_{\dot H^{s}}.$$ 
Hence, using Lemma \ref{l:compo} yields
$$\|\nabla(\mu^2-\Delta)^{-1}\rho^{\frac2{\gamma-1}}\|_{\dot H^s}\lesssim 
\|\rho\|_{L^\infty}^{\frac2{\gamma-1}-1} \|\rho\|_{\dot H^s}\quad\hbox{if }\ 
s<\frac2{\gamma-1}+\frac12\cdotp
$$
Consequently the denominator of the inequality for $\dot Y_s$ becomes 
$(1+t)^{(\frac d2+c_{d,\gamma}-a)(\frac2{\gamma-1}-1)}$ so that, using Lemma \ref{l:ODE},
we now arrive, if $d=2,$  at the constraints
$$ 1<\gamma <\frac73\andf  s<\frac2{\gamma-1}+\frac12\cdotp$$
Using the same method as for  $d\geq3$ (one has to take $m'=1+\bigl(\frac2{\gamma-1}\bigr)
\bigl(a-c_{d,\gamma}-\frac d2\bigr)$ and $m=\frac2{\gamma-1}-1$ in Lemma \ref{l:ODE}), 
one  can still complete  the proof if
$$
1<\gamma<1+\frac4{d+1}\andf s<\frac2{\gamma-1}+\frac12\cdotp
$$
Note that, if $\gamma=1+\frac2k\,$ for some $k>2,$ then $\frac2{\gamma-1}$ is an integer so that 
the only remaining constraint on $s$ is that  $s>1+\frac d2\cdotp$




\section{Proving  Theorem \ref{isentro}}

A number of works have been devoted to the local existence issue for the Euler-Poisson system, 
in various functional settings (see e.g.  Makino \cite{Ma}, Gamblin \cite{ga}, B\'ezard \cite{be} and Brauer-Karp \cite{BK}). 
However, to the best of our knowledge, none of them treats also the case $\mu\not=0$ and Sobolev spaces 
with fractional regularity (furthermore, our data are not exactly in uniformly local Sobolev spaces). 
For the reader's convenience, we sketch the proof of global existence
for \eqref{i1}--\eqref{i0} in the functional setting of Theorem \ref{isentro}, 
then establish  uniqueness by means of a classical energy method.

\subsection{Existence} 

Here we are given $(\rho_0,u_0)$ satisfying the assumptions of Theorem \ref{isentro}. 
Our goal is to prove the existence of a global-in-time solution. 
\subsubsection*{Step 1: Solving an approximate system}
Fix some cut-off function $\chi\in \cC^\infty_c(\R^d)$ supported in, say, the ball $B(0,4/3)$ and 
with value $1$ on $B(0,3/4).$ Set $v^n:=\chi(n^{-1}\cdot)\,v.$ 
Let $J_n$ be the Friedrichs' truncation operator 
defined by  $J_n z:=\cF^{-1}(1_{B(0,n)} \cF z).$ 

For all $n\geq1,$ we consider the following regularization  of~$(BB)$:
$$
\left\{\begin{array}{l} 
\d_t\rho+J_n((v^n\!+\!J_nw)\cdot\nabla J_n\rho)+\frac{\gamma-1}2J_n(J_n\rho\,\div(v^n\!+\!J_nw))=0,\\[1.5ex]
\d_tw+J_n((v^n\!+\!J_nw)\cdot\nabla J_nw)+\frac{\gamma-1}2J_n(J_n\rho\,\nabla J_n\rho)+J_n(J_nw\cdot\nabla v^n)\\
\hspace{6cm}=-\kappa\wt G J_n\nabla(\mu^2\Id-\Delta)^{-1}(J_n\rho)^{\frac2{\gamma-1}},\end{array}\right.\leqno(BB_n)$$
 supplemented with initial data $(J_n\rho_0,J_nw_0).$ 
\medbreak
Note that $v^n$ is in $\cC(\R_+;H^{s+1}).$ Hence the above system may be seen as an ODE in $L^2(\R^d;\R\times\R^d).$ Applying the standard Cauchy-Lipschitz theorem thus 
ensures that there exists a unique maximal solution $(\rho^n,w^n)\in\cC^1([0,T^n);L^2)$ to $(BB_n).$

Now, from  $J_n^2=J_n,$ we deduce that  $(J_n\rho^n,J_nw^n)$ also satisfies $(BB_n).$ Hence, uniqueness of the solution 
entails that $J_n\rho^n=\rho^n$ and $J_nw^n=w^n.$ In other words, $(\rho^n,w^n)$ is spectrally localized in the ball $B(0,n),$
and one can thus assert that  $(\rho^n,w^n)\in\cC^1([0,T^n);H^\sigma)$ for all $\sigma$ in $\R,$ 
and  actually satisfies: 
\begin{equation}\label{eq:BBn}
\left\{\begin{array}{l} 
\d_t\rho^n+J_n((v^n\!+\!w^n)\cdot\nabla\rho^n)+\frac{\gamma-1}2J_n(\rho^n\,\div(v^n\!+\!w^n))=0,\\[1.5ex]
\d_tw^n+J_n((v^n\!+\!w^n)\cdot\nabla w^n)+\frac{\gamma-1}2J_n(\rho^n\,\nabla\rho^n)+J_n(w^n\cdot\nabla v^n)\\
\hspace{6cm}=-\kappa\wt G J_n\nabla(\mu^2\Id-\Delta)^{-1}(\rho^n)^{\frac2{\gamma-1}},\\[1.5ex]
(\rho^n,w^n)|_{t=0}=(J_n\rho_0,J_nw_0).\end{array}\right.\end{equation}

\subsubsection*{Step 2: Uniform a priori estimates in the solution space}
Since  $J_n$ is an orthogonal projector in any Sobolev space and  $(J_n\rho^n,J_nw^n)=(\rho^n,w^n),$
 one can repeat verbatim (and rigorously) the computations of subsection \ref{ss:sob}. The only change is that 
 since $Dv^n=Dv +{\mathcal O}(n^{-1}),$  the final estimates therein
 only hold on the time interval $[0,\min(cn,T_n))$ for some $c>0,$
 which eventually implies that  
   $T_n\geq cn.$

The conclusion of this step is that for any fixed $T>0,$ the couple 
 $(\rho^n,w^n)$  for $n$ large enough is defined on $[0,T],$  
 belongs to $\cC^1([0,T];H^\sigma)$ for all $\sigma\in\R$ and is
bounded in $L^\infty([0,T];H^s).$

\subsubsection*{Step 3: Convergence}    Let us fix some $T>0.$ 
Given the uniform bounds of the previous step, the weak $*$ compactness theorem 
ensures that (up to an omitted extraction),  there exists some $(\rho,w)\in L^\infty([0,T];H^s)$ so that 
$$(\rho^n,w^n)\rightharpoonup (\rho,w) \ \hbox{ weak }\ * \ \hbox{ in }\  L^\infty([0,T];H^s).$$
Furthermore, computing $\d_t\rho^n$ and $\d_tw^n$ by means of \eqref{eq:BBn} and using 
standard product laws in Sobolev spaces, one can prove that  for all $\theta\in \cC^\infty_c(\R^d),$ 
$(\theta\d_t\rho^n,\theta\d_tw^n)$ is bounded in $L^\infty([0,T]; H^{s-1}).$ 
Hence, from  Aubin-Lions lemma, interpolation and Cantor diagonal process, we gather that (still up to an omitted extraction), 
we have 
$$(\theta\rho^n,\theta w^n)\to (\theta\rho,\theta w)\quad\hbox{in}\quad L^\infty([0,T];H^{s'})
\quad\hbox{for all }\ \theta\in \cC^\infty_c(\R^d)\andf s'<s.$$
This allows to pass to the limit in $\eqref{eq:BBn}$ and to conclude that 
$(\rho,w)$ satisfies $(BB)$ on $[0,T]\times\R^d.$ Of course, since $T>0$ is arbitrary, $(\rho,w)$
actually satisfies  $(BB)$ on $\R_+\times\R^d$ and belongs to $L^\infty_{loc}(\R_+;H^s).$ 

\subsubsection*{Step 4: Time continuity}  
That $(\rho,w)$ lies in $\cC(\R_+;H^s)$ may be achieved either by adapting the arguments of Kato in \cite{K}
or those of \cite[Chap. 4]{BCD}. 
Note that ref. \cite{K} allows in addition to prove the continuity of the flow map in the space $\cC(\R_+;H^s).$ 



\subsection{Uniqueness}

This part is devoted to the proof of the following uniqueness result.

\begin{prop}\label{locunik}
Let  the assumptions of Theorem \ref{isentro} be in force and 
assume that $(\varrho^1,u^1)$ and $(\varrho^2,u^2)$ are two solutions of $(MP)$ on $[0,T]\times\R^d$
such that $(\varrho^i)^{\frac{\gamma-1}2} \in \cC([0,T];L^2\cap L^\infty),$
$\nabla\varrho^i\in L^\infty\left([0,T]\times\R^d\right),$ 
$(u^i-v^i)\in \cC([0,T[;L^2)$ and $\nabla u^i\in L^1([0,T];L^\infty)$ for $i=1,2.$
\medbreak
If, in addition, $(\varrho^1,u^1)$ and $(\varrho^2,u^2)$ coincide at time $t=0,$
then the two solutions are the same on $[0,T]\times\R^d.$ 
\end{prop}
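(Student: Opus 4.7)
The plan is to perform a standard $L^2$ energy estimate on the difference of the two solutions after passing to the Makino reformulation $(MP)$. Setting $\rho^i:=\frac{2\sqrt{A\gamma}}{\gamma-1}(\varrho^i)^{(\gamma-1)/2}$ and
\[
\dr:=\rho^1-\rho^2,\qquad \dw:=u^1-u^2,\qquad\df:=\phi^1-\phi^2,
\]
subtracting the two copies of $(MP)$ yields the linear hyperbolic system
\[
\left\{\begin{array}{l}
(\d_t+u^1\cdot\nabla)\dr+\frac{\gamma-1}2\bigl(\rho^1\div\dw+\dr\div u^2\bigr)+\dw\cdot\nabla\rho^2=0,\\[1ex]
(\d_t+u^1\cdot\nabla)\dw+\frac{\gamma-1}2\bigl(\rho^1\nabla\dr+\dr\nabla\rho^2\bigr)+\dw\cdot\nabla u^2=\kappa\nabla\df,\\[1ex]
(\Delta-\mu^2)\df=\wt G\bigl((\rho^1)^{\frac2{\gamma-1}}-(\rho^2)^{\frac2{\gamma-1}}\bigr),
\end{array}\right.
\]
transported by $u^1$, with zero initial data. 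The hypothesis $\nabla\varrho^i\in L^\infty$ together with $\varrho^i\in L^\infty$ gives $\nabla\rho^i\in L^\infty$ via the chain rule (and in any case, for the solutions constructed in Theorem \ref{isentro}, $\rho\in H^s\hookrightarrow W^{1,\infty}$ makes this automatic).

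I would next take the $L^2$ inner product of the first equation with $\dr$ and of the second with $\dw$, integrate by parts the transport terms (legitimate since $\nabla u^1\in L^1([0,T];L^\infty)$), and add. The crucial observation is that the two acoustic cross terms combine through
\[
\frac{\gamma-1}2\int_{\R^d}\rho^1\bigl(\dr\,\div\dw+\nabla\dr\cdot\dw\bigr)dx=\frac{\gamma-1}2\int_{\R^d}\rho^1\,\div(\dr\,\dw)\,dx=-\frac{\gamma-1}2\int_{\R^d}\nabla\rho^1\cdot\dr\,\dw\,dx,
\]
which is bounded by $\|\nabla\rho^1\|_{L^\infty}\|\dr\|_{L^2}\|\dw\|_{L^2}$. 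The remaining zeroth-order terms $\dw\cdot\nabla\rho^2\,\dr$, $|\dr|^2\div u^2$, $\dw\cdot\nabla u^2\cdot\dw$ and $\dr\,\nabla\rho^2\cdot\dw$ are controlled at once by $\bigl(\|\nabla\rho^2\|_{L^\infty}+\|\nabla u^2\|_{L^\infty}\bigr)E(t)$, where $E(t):=\|\dr(t)\|_{L^2}^2+\|\dw(t)\|_{L^2}^2$.

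The only delicate term is the coupling $\kappa\int\nabla\df\cdot\dw\,dx$, for which I need $\|\nabla\df\|_{L^2}\lesssim\|\dr\|_{L^2}$. Since $\gamma<3$ in every case of Theorem \ref{isentro}, the exponent $\alpha:=\frac2{\gamma-1}$ exceeds $1$, and the mean value inequality provides the pointwise bound $\bigl|(\rho^1)^\alpha-(\rho^2)^\alpha\bigr|\lesssim\bigl(\|\rho^1\|_{L^\infty}+\|\rho^2\|_{L^\infty}\bigr)^{\alpha-1}|\dr|$. In the Helmholtz case $\mu>0$, $\nabla(\mu^2-\Delta)^{-1}$ is $L^2$-bounded and the estimate is immediate. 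In the Poisson case $\mu=0$ (for which $d\geq3$ is imposed), I would combine the Sobolev embedding $L^p\hookrightarrow\dot H^{-1}$ with $\frac1p=\frac12+\frac1d$, H\"older's inequality, and the integrability $\rho^i\in L^2\cap L^\infty\subset L^q$ for every $q\in[2,\infty]$ to again reach $\|\nabla\df\|_{L^2}\lesssim\|\dr\|_{L^2}$.

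Assembling these bounds yields $\frac{d}{dt}E(t)\leq C(t)E(t)$ with $C\in L^1([0,T])$, depending only on $\|(\rho^i,\nabla\rho^i,\nabla u^i)\|_{L^\infty}$ and $\|\rho^i\|_{L^2}$. Gronwall's lemma together with $E(0)=0$ then forces $E\equiv0$ on $[0,T]$, hence $(\varrho^1,u^1)=(\varrho^2,u^2)$. The hardest step is the Poisson coupling: the absence of low-frequency decay of $(-\Delta)^{-1}$ forces both the restriction $d\geq3$ and the H\"older/Sobolev bookkeeping arising from the non-integer power $\alpha=\frac2{\gamma-1}$, whereas the acoustic symmetrization, the transport estimates, and the Helmholtz coupling are all straightforward once Makino's change of unknown has been made.
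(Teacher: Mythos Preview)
Your argument is correct and matches the paper's approach: an $L^2$ energy estimate on the difference, exploiting the acoustic cancellation, followed by the same treatment of the coupling term (direct $L^2$ boundedness of $\nabla(\mu^2-\Delta)^{-1}$ in the Helmholtz case, the embedding $L^p\hookrightarrow\dot H^{-1}$ plus H\"older in the Poisson case with $d\geq3$). The only cosmetic difference is that the paper splits $u^i=v+w^i$ before differencing; since the two solutions share the same initial data and hence the same reference Burgers solution $v$, your choice $\dw=u^1-u^2$ coincides with theirs and lies in $L^2$---you may wish to say this explicitly.
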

{\bf Proof:}  We define $w^i = u^i-v^i$ for $i=1,2$, and
$(\dr,\dw,\df):=(\rho^2-\rho^1,w^2-w^1,\phi^2-\phi^1)$. Since $(\varrho^1,u^1)$ and $(\varrho^2,u^2)$ coincide at
time $t=0,$ so do $v^1$ and $v^2$. Hence, uniqueness for Burgers equation implies $v^1=v^2:=v$. We thus have  
$$
\left\{\begin{array}{l} 
(\d_t+w^2\cdot\nabla)\dr+\frac{\gamma-1}2\rho^2\,\div\dw+v\cdot\nabla\dr+\frac{\gamma-1}2\dr\,\div v=
-\dw\cdot\nabla\rho^1-\frac{\gamma-1}2\dr\,\div w^1,\\[1.5ex]
(\d_t+w^2\cdot\nabla)\dw+\frac{\gamma-1}2\rho^2\,\nabla\dr+v\cdot\nabla\dw+\dw\cdot\nabla v\\
\hspace{6cm}=-\dw\cdot\nabla w^1
         -\frac{\gamma-1}2\dr\nabla\rho^1+\kappa\nabla\df,\\[1.5ex]
\Delta\df-\mu^2\df=\wt G \bigl((\rho^2)^{\frac2{\gamma-1}}-(\rho^1)^{\frac2{\gamma-1}}\bigr)\cdotp\end{array}\right.
$$
Taking the $L^2$ scalar product of the first and second equations with $\dr$ and $\dw,$ respectively,
and arguing as for proving  \eqref{eq:L20}, we get 
$$\displaylines{\frac12\frac d{dt}\|(\dr,\dw)\|_{L^2}^2=\frac12\int_{\R^d}(\dr^2+|\dw|^2)\div(v+w^2)\,dx
+\frac{\gamma-1}2\!\int_{\R^d}\dr\,\dw\cdot\nabla(\rho^2-\rho^1)\,dx\hfill\cr\hfill
-\int_{\R^d}\dw\cdot\nabla\rho^1\dr\,dx-\frac{\gamma-1}2\int_{\R^d}(\dr)^2\div(v+w^1)\,dx
\hfill\cr\hfill-\int_{\R^d}(\dw\cdot\nabla(v+w^1))\cdot\dw\,dx +\kappa\int_{\R^d}\nabla\df\cdot\dw\,dx,}$$
whence 
$$
\frac d{dt}\|(\dr,\dw)\|_{L^2}^2\leq C\|\nabla v,\nabla w^1,\nabla w^2,\nabla\rho^1,\nabla\rho^2\|_{L^\infty}
\|(\dr,\dw)\|_{L^2}^2+|\kappa|\|\nabla\df\|_{L^2}\|\dw\|_{L^2}.
$$
After time integration, this gives  for all $t\in[0,T]$ (since $\dr|_{t=0}=0$ and $\dw|_{t=0}=0$): 
$$\|(\dr,\dw)(t)\|_{L^2}\leq C\int_0^t\|\nabla v,\nabla w^1,\nabla w^2,\nabla\rho^1,\nabla\rho^2\|_{L^\infty}
\|(\dr,\dw)\|_{L^2}\,d\tau +|\kappa|\int_0^t \|\nabla\df\|_{L^2}.$$
\begin{itemize}
\item[---] In the pure Euler case (namely $\kappa=0$), applying Gronwall lemma
readily gives $(\dr,\dw)\equiv(0,0)$ on $[0,T].$
\item[---] In the Poisson case  ($\kappa\not=0$ and $\mu=0$)
with  $d\geq3$ one can use the fact that operator $\nabla(-\Delta)^{-1}$ maps 
$L^p$ (with $\frac1p=\frac1d+\frac12$) to $L^2$ and that
$$
(\rho^2)^{\frac2{\gamma-1}}-(\rho^1)^{\frac2{\gamma-1}}=\frac2{\gamma-1}\dr\int_0^1 (\rho^1+\eta\dr)^{\frac2{\gamma-1}-1}\,d\eta.
$$
Hence 
$$\|\nabla\df\|_{L^2}\leq C\|\dr\|_{L^2}\biggl\|\int_0^1 (\rho^1+\eta\dr)^{\frac2{\gamma-1}-1}\,d\eta\biggr\|_{L^d}\cdotp$$
Note that since $\gamma\leq 1+2d/(d+2),$ and $\rho^i\in L^\infty(0,T;L^2\cap L^\infty)$ for $i=1,2,$ 
we are guaranteed  that the integral is  bounded  in terms of $\rho^1$ and $\rho^2.$ 
Hence, we have 
$$\|\nabla\df\|_{L^2}\leq C_{\rho^1,\rho^2}\|\dr\|_{L^2}$$
and applying Gronwall lemma still ensures  uniqueness.
\item[---] In the Helmholtz  case ($\kappa\not=0$ and $\mu\not=0$) with  $d\geq2,$
we just use the fact that $\nabla(\mu-\Delta)^{-1}$ maps $L^2$ to itself, and thus, arguing 
as above,  
 $$\|\nabla\df\|_{L^2}\leq
 C\|\dr\|_{L^2}\biggl\|\int_0^1 (\rho^1+\eta\dr)^{\frac2{\gamma-1}-1}\,d\eta\biggr\|_{L^\infty}\cdotp$$
Since $\rho^1$ and $\rho^2$ are bounded, one can apply Gronwall lemma to 
prove that the two solutions coincide. 
\end{itemize}
This completes the proof of the proposition. \qed


\vskip1cm
\section*{Appendix}
\vskip1cm

For the reader's convenience, we prove here some technical results that have been used in the paper. 
Let us start with an ODE estimate.
\begin{lem}\label{l:ODE} Let $Y:\R_+\to\R_+$ satisfy the differential inequality 
$$\frac d{dt}Y+\frac a{1+t}Y\leq C\biggl(\frac Y{(1+t)^2}+Y^2+(1+t)^{m'-1}Y^{m+1}\biggr)\quad\hbox{on }\ \R_+$$
for some $C>0,$ $a>1,$ $m>0$ and $m'<ma.$ 
Then, there exists $c=c(a,m,m',C)$ such that if $Y(0)\leq c,$ then we have
$$
Y(t)\leq 2e^{\frac{Ct}{1+t}}\frac{Y_0}{(1+t)^a}\quad\hbox{for all }\ t\geq0.
$$
\end{lem}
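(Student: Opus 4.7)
The plan is to reduce the differential inequality to one for a renormalized unknown via a well-chosen integrating factor, and then to close the estimate by a continuity (bootstrap) argument that exploits the smallness of $Y(0)$.

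I would first introduce
\[
Z(t) := (1+t)^{a}\,e^{-Ct/(1+t)}\,Y(t),
\]
so that $Z(0)=Y_0$ and the announced bound becomes simply $Z(t)\leq 2Y_0$. The weight is chosen precisely because its logarithmic derivative equals $\frac{a}{1+t}-\frac{C}{(1+t)^2}$, which cancels both the linear drift $\frac{a}{1+t}Y$ on the left-hand side of the hypothesis and the linear remainder $\frac{CY}{(1+t)^2}$ on the right. A direct computation gives
\[
\frac{dZ}{dt}=(1+t)^{a}e^{-Ct/(1+t)}\left(\frac{dY}{dt}+\frac{a}{1+t}Y-\frac{C}{(1+t)^{2}}Y\right),
\]
so the hypothesis on $Y$ becomes
\[
\frac{dZ}{dt}\leq C\,(1+t)^{a}e^{-Ct/(1+t)}\bigl(Y^{2}+(1+t)^{m'-1}Y^{m+1}\bigr).
\]
Substituting back $Y=(1+t)^{-a}e^{Ct/(1+t)}Z$ and using $Ct/(1+t)\in[0,C]$ to absorb all exponential factors into a constant $\tilde C=\tilde C(a,m,C)$ yields
\[
\frac{dZ}{dt}\leq \tilde C\bigl((1+t)^{-a}Z^{2}+(1+t)^{m'-1-ma}Z^{m+1}\bigr).
\]

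The two hypotheses $a>1$ and $m'<ma$ are then used to observe that both exponents $-a$ and $m'-1-ma$ are strictly smaller than $-1$; consequently the two powers of $(1+t)$ are integrable on $\R_+$, with explicit integrals $I_{1}=(a-1)^{-1}$ and $I_{2}=(ma-m')^{-1}$.

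To close the argument, I would run a bootstrap: set
\[
T^{\ast}:=\sup\{\,t\geq 0 : Z(s)\leq 2Y_0\ \text{for all}\ s\in[0,t]\,\}.
\]
Since $Z(0)=Y_0<2Y_0$, continuity gives $T^{\ast}>0$. Assuming for contradiction that $T^{\ast}<\infty$, continuity forces $Z(T^{\ast})=2Y_0$. Integrating the displayed differential inequality on $[0,T^{\ast}]$ and using $Z\leq 2Y_0$ throughout, one obtains
\[
Z(T^{\ast})\leq Y_0+\tilde C\bigl(4Y_0^{2}\,I_{1}+2^{m+1}Y_0^{m+1}\,I_{2}\bigr),
\]
and the right-hand side is strictly less than $2Y_0$ as soon as $Y_0\leq c$ for some threshold $c=c(a,m,m',C)>0$ small enough. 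This contradicts the definition of $T^{\ast}$, so $T^{\ast}=\infty$, and reverting to the definition of $Z$ produces exactly $Y(t)\leq 2e^{Ct/(1+t)}Y_0/(1+t)^{a}$.

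There is no genuine obstacle here; the only delicate point — and the reason for the otherwise surprising factor $e^{Ct/(1+t)}$ appearing in the statement — is the design of the integrating factor so as to absorb exactly the linear term $CY/(1+t)^{2}$, leaving only two genuinely super-linear contributions that can then be controlled by smallness of the initial data.
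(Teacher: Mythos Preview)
Your proof is correct and follows essentially the same approach as the paper: the same substitution $Z(t)=(1+t)^{a}e^{-Ct/(1+t)}Y(t)$, the same reduction to an inequality with integrable coefficients thanks to $a>1$ and $m'<ma$, and the same bootstrap argument showing $Z\leq 2Y_0$. The only cosmetic difference is that the paper phrases the bootstrap directly rather than via $T^\ast$ and explicitly sets aside the trivial case $Y_0=0$ (your line ``$Z(0)=Y_0<2Y_0$'' tacitly assumes $Y_0>0$).
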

\begin{proof}
We set $Z(t):=(1+t)^ae^{-\frac{Ct}{1+t}}Y(t)$ and observe that  the above differential inequality recasts in 
$$\frac d{dt} Z\leq C(1+t)^{-a}e^{\frac{Ct}{1+t}}Z^2+C(1+t)^{m'-ma-1}e^{\frac{Cmt}{1+t}}Z^{m+1},$$
which implies that
\begin{equation}\label{eq:Z1}\frac d{dt} Z\leq Ce^C(1+t)^{-a}Z^2+C(1+t)^{m'-ma-1}e^{Cm}Z^{m+1}.\end{equation}
The conclusion stems from a  bootstrap argument : let $Z_0:=Z(0)$ and  assume that 
\begin{equation}\label{eq:Z2}
Z(t)\leq 2Z_0\quad\hbox{on}\quad [0,T].\end{equation}
 Then \eqref{eq:Z1} implies that
$$\frac d{dt} Z\leq 4Ce^C(1+t)^{-a}Z_0^2+C(1+t)^{m'-ma-1}e^{Cm}(2Z_0)^{m+1}.$$
Hence, integrating in time, we discover that on $[0,T],$ we have
$$Z(t)\leq Z_0+\frac{4Ce^C}{a-1}Z_0^2\bigl(1-(1+t)^{1-a}\bigr)+\frac{Ce^{Cm}(2Z_0)^{m+1}}{ma-m'}\bigl(1-(1+t)^{m'-ma}\bigr)\cdotp$$
Let us discard the obvious case $Z_0=0.$ Then, if  $Z_0$ is so small as to satisfy 
$$\frac{4Ce^C}{a-1}Z_0+\frac{2^{m+1}Ce^{Cm} Z_0^m}{ma-m'}\leq 1,$$
the above inequality ensures that  we actually have $Z(t)<2Z_0$ on $[0,T]$. Therefore the supremum of $T>0$ satisfying \eqref{eq:Z2} has to be infinite.
\end{proof}

The following result has been used to bound the potential term. 
\begin{lem}\label{l:compo} Let $\alpha\geq1$ and $0\leq\sigma<\alpha+\frac12\cdotp$ 
Then we have the inequality
\begin{equation}\label{eq:compo}
\||z|^\alpha\|_{\dot H^\sigma}\lesssim \|z\|_{L^\infty}^{\alpha-1}\|z\|_{\dot H^\sigma}.
\end{equation}
\end{lem}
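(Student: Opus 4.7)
I would split the argument according to the size of $\sigma$: the range $\sigma\in[0,1]$ can be treated by hand via the finite-difference characterization of $\dot H^\sigma$, while the range $\sigma>1$ is reduced to the previous one by induction on $\lceil\sigma\rceil$, combined with a tame product estimate.

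\medbreak
For $\sigma=0$ the pointwise bound $||z|^\alpha|\leq \|z\|_{L^\infty}^{\alpha-1}|z|$ is enough. For $\sigma\in(0,1)$, I would use the Gagliardo characterization
$$\|f\|_{\dot H^\sigma}^2\approx\iint_{\R^d\times\R^d}\frac{|f(x)-f(y)|^2}{|x-y|^{d+2\sigma}}\,dx\,dy,$$
combined with the elementary mean-value inequality (valid for $\alpha\geq1$ and any $a,b\in\R$)
$$\bigl||a|^\alpha-|b|^\alpha\bigr|\leq \alpha\,\max(|a|,|b|)^{\alpha-1}|a-b|\leq \alpha\,\|z\|_{L^\infty}^{\alpha-1}|a-b|,$$
applied with $a=z(x)$, $b=z(y)$. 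The case $\sigma=1$ follows the same pattern from the a.e.~identity $\nabla|z|^\alpha=\alpha|z|^{\alpha-1}\mathrm{sgn}(z)\,\nabla z$.

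\medbreak
For $\sigma>1$ I would argue by induction on $n:=\lceil\sigma\rceil$. Writing $\sigma=1+\sigma'$ with $\sigma'\in[0,\alpha-1/2)$, one has
$$\||z|^\alpha\|_{\dot H^\sigma}\approx \|\nabla|z|^\alpha\|_{\dot H^{\sigma'}}=\alpha\,\|g(z)\,\nabla z\|_{\dot H^{\sigma'}}, \qquad g(z):=|z|^{\alpha-1}\mathrm{sgn}(z),$$
and I would invoke the standard tame product estimate
$$\|uv\|_{\dot H^{\sigma'}}\lesssim \|u\|_{L^\infty}\|v\|_{\dot H^{\sigma'}}+\|v\|_{L^\infty}\|u\|_{\dot H^{\sigma'}}.$$
The $L^\infty$--$\dot H^{\sigma'}$ term immediately produces $\|z\|_{L^\infty}^{\alpha-1}\|z\|_{\dot H^\sigma}$. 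The dual term is handled by the induction hypothesis applied to $g(z)$, which behaves like $|z|^{\alpha-1}$; the constraint $\sigma'<(\alpha-1)+\frac12$ is inherited from $\sigma<\alpha+\frac12$, and an interpolation/Gagliardo-Nirenberg step absorbs the remaining $\|\nabla z\|_{L^\infty}$ factor.

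\medbreak
\emph{Main obstacle.} The case $\alpha\in[1,2)$ with $\sigma>1$ escapes the naive induction because $g$ is only H\"older, not Lipschitz, and thus cannot be directly estimated in $\dot H^{\sigma'}$ by the induction hypothesis. I would handle it via Bony's paralinearization: writing $|z|^\alpha=T_{F'(z)}z+R(z)$ with $F(t):=|t|^\alpha$, the paraproduct obeys $\|T_{F'(z)}z\|_{\dot H^\sigma}\lesssim \|F'(z)\|_{L^\infty}\|z\|_{\dot H^\sigma}\lesssim \|z\|_{L^\infty}^{\alpha-1}\|z\|_{\dot H^\sigma}$, while the remainder $R(z)$, thanks to $F'\in C^{\alpha-1}$, gains $(\alpha-1)$ derivatives in the H\"older scale. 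The sharp upper bound $\sigma<\alpha+\frac12$ is precisely what ensures this gain suffices to estimate $R(z)$ in $\dot H^\sigma$ by $\|z\|_{L^\infty}^{\alpha-1}\|z\|_{\dot H^\sigma}$, closing the proof.
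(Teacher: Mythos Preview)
Your argument for $\sigma\in[0,1]$ is fine, but the induction step for $\sigma>1$ has a genuine gap. After applying the tame product estimate to $g(z)\,\nabla z$ you are left with the term
\[
\|\nabla z\|_{L^\infty}\,\|g(z)\|_{\dot H^{\sigma-1}}\;\lesssim\;\|\nabla z\|_{L^\infty}\,\|z\|_{L^\infty}^{\alpha-2}\,\|z\|_{\dot H^{\sigma-1}}
\]
(using the induction hypothesis), and you then claim that ``an interpolation/Gagliardo--Nirenberg step absorbs the remaining $\|\nabla z\|_{L^\infty}$ factor''. This would amount to the inequality
\[
\|\nabla z\|_{L^\infty}\,\|z\|_{\dot H^{\sigma-1}}\;\lesssim\;\|z\|_{L^\infty}\,\|z\|_{\dot H^{\sigma}},
\]
which is \emph{false} in general. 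Under the sole assumptions $z\in \dot H^\sigma\cap L^\infty$ there is no control on $\|\nabla z\|_{L^\infty}$ whatsoever once $\sigma\leq 1+d/2$; for a concrete counterexample take $d=3$, $\sigma=2$, and $z=\phi+\delta^{3/4}\psi(\cdot/\delta)$ with two fixed bump functions $\phi,\psi$: as $\delta\to0$ one has $\|z\|_{L^\infty}\approx1$, $\|z\|_{\dot H^1}\approx1$, $\|z\|_{\dot H^2}\approx1$, but $\|\nabla z\|_{L^\infty}\approx\delta^{-1/4}\to\infty$. So the route through the $L^\infty$--$\dot H^{\sigma'}$ tame estimate cannot close without extra hypotheses you do not have. (The same obstruction hits the paralinearization sketch for $\alpha\in[1,2)$: controlling the remainder $R(z)$ in $\dot H^\sigma$ with the precise prefactor $\|z\|_{L^\infty}^{\alpha-1}$, using only $z\in\dot H^\sigma\cap L^\infty$, is essentially the full strength of the result you are trying to prove.)

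The paper's proof is entirely different and much shorter: it quotes the \emph{inhomogeneous} version
\[
\||z|^\alpha\|_{H^\sigma}\;\lesssim\;\|z\|_{L^\infty}^{\alpha-1}\,\|z\|_{H^\sigma}
\]
as a black box from Kateb, and then derives the homogeneous inequality by scaling: apply the inhomogeneous estimate to $z(\lambda\,\cdot)$, use $\|z(\lambda\cdot)\|_{\dot H^\sigma}=\lambda^{\sigma-d/2}\|z\|_{\dot H^\sigma}$ and $\|z(\lambda\cdot)\|_{H^\sigma}\approx\lambda^{-d/2}(\|z\|_{L^2}+\lambda^\sigma\|z\|_{\dot H^\sigma})$, and let $\lambda\to+\infty$ so that the $L^2$ contribution drops out. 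If you want a self-contained argument, you would have to reproduce Kateb's proof (which is where the delicate constraint $\sigma<\alpha+\frac12$ actually enters) rather than the chain-rule/induction scheme above.
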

\begin{proof}
The corresponding  inequality  for nonhomogeneous Sobolev spaces, namely
$$\||z|^\alpha\|_{H^\sigma}\lesssim \|z\|_{L^\infty}^{\alpha-1}\|z\|_{H^\sigma}$$
  is a particular case of \cite[Thm. 1.2]{Kateb}.
\medbreak
Applying that inequality to $z(\lambda\cdot)$ for all $\lambda>0,$ and using the fact that
$$
\||z(\lambda\cdot)|^\alpha\|_{\dot H^\sigma}=\lambda^{\sigma-\frac d2} 
\||z|^\alpha\|_{\dot H^\sigma}\andf
\|z(\lambda\cdot)\|_{H^\sigma}\approx \lambda^{-\frac d2}\bigl(\|z\|_{L^2} + \lambda^{\sigma} \|z\|_{\dot H^\sigma}\bigr),
$$
we get the desired inequality after having $\lambda$ tend to $+\infty.$
\end{proof}


We also used the following first order  commutator estimate
that  corresponds to  the end of \cite[Rem. 1.5]{DL}, or may be seen as a straightforward adaptation 
  to the homogeneous framework of the second inequality of
\cite[Lemma A.2]{BDD}:
\begin{lem}\label{l:com1} If $s>0,$ then we have:
$$\|[v,\dot\Lambda^s]u\|_{L^2}\lesssim
\|v\|_{\dot H^s}\|u\|_{L^\infty}+\|\nabla v\|_{L^\infty}\|u\|_{\dot H^{s-1}}.$$
\end{lem}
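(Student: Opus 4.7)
My plan is to reduce the commutator to the standard three pieces given by Bony's homogeneous paraproduct decomposition. Writing $vu = T_v u + T_u v + R(v,u)$ and $v\,\dot\Lambda^s u = T_v\dot\Lambda^s u + T_{\dot\Lambda^s u}v + R(v,\dot\Lambda^s u)$, I obtain
$$[v,\dot\Lambda^s]u = [T_v,\dot\Lambda^s]u + \bigl(T_{\dot\Lambda^s u}v - \dot\Lambda^s T_u v\bigr) + \bigl(R(v,\dot\Lambda^s u) - \dot\Lambda^s R(v,u)\bigr),$$
and it suffices to bound each of the three terms in $L^2$ by the right-hand side of the claimed inequality.

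For the paraproduct commutator $[T_v,\dot\Lambda^s]u = \sum_q [S_{q-1}v,\dot\Lambda^s]\dot\Delta_q u$, each summand is spectrally localized in an annulus of size $2^q$, so $\dot\Lambda^s$ acts there as convolution with a kernel of the form $2^{q(s+d)}h(2^q\cdot)$ for a fixed Schwartz function $h$. A first-order Taylor expansion of $S_{q-1}v$ inside the convolution produces a factor $\nabla S_{q-1}v$ (controlled by $\|\nabla v\|_{L^\infty}$) together with a remaining convolution that gains exactly one derivative, yielding the dyadic bound $\|[S_{q-1}v,\dot\Lambda^s]\dot\Delta_q u\|_{L^2}\lesssim \|\nabla v\|_{L^\infty}\,2^{q(s-1)}\|\dot\Delta_q u\|_{L^2}$. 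Almost-orthogonality and $\ell^2$-summation in $q$ then deliver the contribution $\|\nabla v\|_{L^\infty}\|u\|_{\dot H^{s-1}}$.

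For the mixed paraproduct term, the spectral localization of $T_u v$ (driven by the high-frequency factor $v$) allows me to commute $\dot\Lambda^s$ past $T_u$ modulo an acceptable error, reducing $\dot\Lambda^s T_u v$ to $T_u \dot\Lambda^s v$, whose $L^2$ norm is bounded by $\|u\|_{L^\infty}\|v\|_{\dot H^s}$ via the standard paraproduct estimate. The symmetric piece $T_{\dot\Lambda^s u}v$ is handled block-by-block using Bernstein's inequality to transfer the $\dot\Lambda^s$ from the low-frequency factor onto $v$, producing a bound of the form $\|\nabla v\|_{L^\infty}\|u\|_{\dot H^{s-1}}$. For the remainder piece, I decompose $R(v,u)=\sum_{|k|\leq N_0}\sum_q \dot\Delta_q v\,\dot\Delta_{q+k}u$; placing $\dot\Delta_q v$ in $L^2$ (weighted by $2^{qs}$) against $\dot\Delta_{q+k}u$ in $L^\infty$, and exploiting $s>0$ to sum the resulting geometric-type series in $q$, produces the same type of bound.

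The delicate point, and the reason $s>0$ is needed, is the low-frequency behavior in the homogeneous setting: the rearrangements above and the dyadic sum in $R(v,\dot\Lambda^s u)-\dot\Lambda^s R(v,u)$ must be shown to define tempered distributions and to sum in $L^2$ without logarithmic divergence at scales $q\to-\infty$. This is precisely the adaptation worked out in \cite[Rem.~1.5]{DL} and \cite[Lemma~A.2]{BDD}, whose arguments transpose to the present statement without change.
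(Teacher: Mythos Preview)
Your approach is correct and is exactly the one the paper has in mind: the paper does not give an independent proof of this lemma but defers to \cite[Rem.~1.5]{DL} and \cite[Lemma~A.2]{BDD}, and its own proof of the second-order version (Lemma~\ref{l:com2}) follows precisely the Bony decomposition you outline, splitting into the paraproduct commutator, the two ``reverse'' paraproducts, and the two remainders. Your treatment of $[T_v,\dot\Lambda^s]u$ via a first-order Taylor expansion, and of the remainders via the $s>0$ summation, matches the standard argument.

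One imprecision is worth flagging. For the term $\dot\Lambda^s T_u v$ you write that you ``commute $\dot\Lambda^s$ past $T_u$ modulo an acceptable error.'' Taken literally, the error is $[\dot\Lambda^s,T_u]v$, and the first-order Taylor argument applied to it would produce a bound $\|\nabla u\|_{L^\infty}\|v\|_{\dot H^{s-1}}$, which does \emph{not} appear on the right-hand side of the claimed inequality. The correct (and simpler) route is the direct paraproduct bound
\[
\|\dot\Lambda^s T_u v\|_{L^2}=\|T_u v\|_{\dot H^s}\lesssim \|u\|_{L^\infty}\|v\|_{\dot H^s},
\]
exactly as the paper does for the analogous term $R^3$ in the proof of Lemma~\ref{l:com2}. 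Your stated conclusion for this piece is right; only the justification needs to be rephrased. (Incidentally, your alternative bound $\|T_{\dot\Lambda^s u}v\|_{L^2}\lesssim\|\nabla v\|_{L^\infty}\|u\|_{\dot H^{s-1}}$, obtained by putting $S_{q-1}\dot\Lambda^s u$ in $L^2$ and $\dot\Delta_q v$ in $L^\infty$ with Bernstein, is valid, though the paper uses instead $\|T_{\dot\Lambda^s u}v\|_{L^2}\lesssim\|\dot\Lambda^s u\|_{\dot B^{-s}_{\infty,\infty}}\|v\|_{\dot H^s}\lesssim\|u\|_{L^\infty}\|v\|_{\dot H^s}$; either works.)
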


The following second order commutator inequality  played a key role in the proof 
of Sobolev estimates with noninteger exponent for the solution to $(BB).$ 
\begin{lem}\label{l:com2} If $s>1,$ then we have:
$$\|[v,\dot\Lambda^s]u-s\nabla v\cdot\dot\Lambda^{s-2}\nabla u\|_{L^2}\lesssim
\|v\|_{\dot H^s}\|u\|_{L^\infty}+\|\nabla^2v\|_{L^\infty}\|u\|_{\dot H^{s-2}}.$$
\end{lem}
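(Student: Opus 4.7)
The strategy mirrors the proof of the first-order estimate in Lemma \ref{l:com1}, but carries the Taylor expansion of the symbol $|\xi|^s$ one order further. I would apply Bony's decomposition $fg = T_f g + T_g f + R(f,g)$ both to $v\dot\Lambda^s u,$ $\dot\Lambda^s(vu)$ and to $\nabla v\cdot \dot\Lambda^{s-2}\nabla u,$ and then regroup. All the pieces in which the $s$ derivatives ultimately fall on $v$ --- namely the high-low paraproducts $T_{\dot\Lambda^s u}v$ and $\dot\Lambda^s T_u v,$ the remainders $R(v,\dot\Lambda^s u)$ and $\dot\Lambda^s R(v,u),$ together with the analogues $T_{\dot\Lambda^{s-2}\nabla u}(\nabla v),$ $R(\nabla v,\dot\Lambda^{s-2}\nabla u)$ coming from the correction term --- are bounded directly by $\|v\|_{\dot H^s}\|u\|_{L^\infty}$ via standard paraproduct estimates combined with Bernstein's inequality on the localized blocks $\dot\Delta_k v$ (which allows one to trade $\dot\Lambda^s$ for $2^{ks}$ without loss). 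All the nontrivial work therefore concentrates on the low-high piece
$$\mathcal{E}:=\sum_j\Bigl([S_{j-1}v,\dot\Lambda^s]\dot\Delta_j u - s\,S_{j-1}(\nabla v)\cdot \dot\Lambda^{s-2}\nabla\dot\Delta_j u\Bigr).$$

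The decisive feature is that, in the $j$-th block, $\widehat{S_{j-1}v}(\zeta)$ is supported in $|\zeta|\lesssim 2^{j-2}$ while $\widehat{\dot\Delta_j u}(\eta)$ is supported in $|\eta|\sim 2^j,$ so the frequency separation $|\zeta|\le|\eta|/2$ justifies a second-order Taylor expansion around $\eta$: with $\xi=\eta+\zeta,$
$$|\xi|^s - |\eta|^s = s\,|\eta|^{s-2}\eta\cdot\zeta + \int_0^1(1-\tau)\,\nabla^2(|\cdot|^s)(\eta+\tau\zeta)\!:\!\zeta\otimes\zeta\,d\tau.$$
A direct Fourier computation shows that the first-order term produces exactly the quantity $s\,S_{j-1}(\nabla v)\cdot\dot\Lambda^{s-2}\nabla\dot\Delta_j u,$ so it cancels the correction piece. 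The quadratic remainder, via Plancherel and the uniform bound $|\nabla^2(|\cdot|^s)(\eta+\tau\zeta)|\lesssim |\eta|^{s-2}$ (which holds on the shifted domain thanks precisely to the separation above), yields the block estimate
$$\bigl\|\text{$j$-th summand of }\mathcal{E}\bigr\|_{L^2}\lesssim \|\nabla^2 v\|_{L^\infty}\,2^{j(s-2)}\|\dot\Delta_j u\|_{L^2}.$$
Each summand being spectrally localized in an annulus $|\xi|\sim 2^j,$ almost orthogonality sums this bound to $\|\nabla^2 v\|_{L^\infty}\|u\|_{\dot H^{s-2}},$ which is exactly the second term in the claimed estimate.

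The main obstacle is handling the Taylor remainder rigorously: one must verify, uniformly in $\tau\in[0,1],$ that $|\nabla^2(|\xi|^s)(\eta+\tau\zeta)|\lesssim|\eta|^{s-2},$ and this is where the specific low-frequency cutoff $S_{j-1}$ (rather than $S_j$ or higher) is essential, as it keeps $|\eta+\tau\zeta|\approx|\eta|$ all along the segment so that the homogeneity of $|\xi|^s$ can be exploited. A secondary point is that $\dot\Lambda^{s-2}$ should only be interpreted on the high-frequency block $\dot\Delta_j u,$ where it is perfectly well defined. Once these two issues are settled, the summation over $j$ and the bookkeeping of the remaining paraproduct and remainder pieces are routine, and the estimate of the lemma follows.
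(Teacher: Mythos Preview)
Your proposal is correct and matches the paper's proof: both use Bony's decomposition, bound the six high-low and remainder pieces by $\|v\|_{\dot H^s}\|u\|_{L^\infty}$ via standard continuity results, and isolate the paracommutator $[T_v,\dot\Lambda^s]u-s\,T_{\partial_k v}\partial_k\dot\Lambda^{s-2}u$ (your $\mathcal{E}$, the paper's $R^1$) as the main term, handled by a second-order Taylor expansion to produce the $\|\nabla^2v\|_{L^\infty}\|u\|_{\dot H^{s-2}}$ contribution. The only cosmetic difference is that the paper carries out the Taylor step in physical space---writing $\tilde\varphi(2^{-j}D)\dot\Lambda^s$ as convolution against a kernel $h_{s,j}$ and expanding $S_{j-1}v(x)-S_{j-1}v(x-y)$ to second order, so that the block estimate follows directly from $\||\cdot|^2 h_{s,j}\|_{L^1}\lesssim 2^{j(s-2)}$---which makes the $L^\infty\times L^2\to L^2$ bound immediate without the bilinear-multiplier step implicit in your appeal to ``Plancherel.''
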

\begin{proof}
  The proof follows the lines of that of \cite[Lemma A.3]{BDD}, which deals with the non-homogeneous case. It relies on  Bony's decomposition and on continuity results for 
the paraproduct and remainder operators. 
For the reader convenience, let us shortly recall how it works. 
Fix some smooth radial function $\chi$ supported in (say) the ball
$B(0,4/3)$ and with value $1$ on $B(0,3/4),$ then set $\varphi:=\chi(\cdot/2)-\chi.$
For all $j\in\Z,$ we define the spectral cut-off operators
$\ddj$  and $\dot S_j$ acting on tempered distributions $u$ as follows:
$$\begin{aligned}
\ddj u&:=\cF^{-1}(\varphi(2^{-j}\cdot)\cF u)=2^{jd} (\cF^{-1}\varphi)(2^j\cdot)\star u,\\
\dot S_j u&:=\cF^{-1}(\chi(2^{-j}\cdot)\cF u)=2^{jd} (\cF^{-1}\chi)(2^j\cdot)\star u,
\end{aligned}
$$
where $\cF$ denotes the Fourier transform on $\R^d.$
\medbreak
Whenever the product $uv$ of two tempered distributions $u$ and $v$
is defined,  its so-called Bony's decomposition  (first introduced in \cite{Bony}) reads:
$$
uv=T_uv+T_vu+R(u,v)
$$
where the \emph{paraproduct} operator $T$ and \emph{remainder} operator $R$ are defined by 
$$
T_uv:=\sum_{j\in\Z}\dot S_{j-1}u\,\ddj v\andf
R(u,v):=\sum_{j\in\Z} \ddj u\,\bigl(\dot\Delta_{j-1}v+\ddj v+\dot\Delta_{j+1}v\bigr)\cdotp
$$
One can now start the proof. 
Decomposing the terms $v\dot \Lambda^s u,$ $\dot\Lambda^s(uv)$ and $\nabla v\cdot\dot\Lambda^{s-2}\nabla u$
according to  Bony's decomposition, we get with the usual summation convention, 
$$
\displaylines{
[v,\dot \Lambda^s]u-  s\nabla v\cdot \dot\Lambda^{s-2}\nabla u=
\underbrace{[T_v,\dot\Lambda^s]u-s\,T_{\d_kv}\d_k\dot\Lambda^{s-2}u}_{R^1}
+\underbrace{T_{\dot\Lambda^su}v}_{R^2}
-\underbrace{\dot\Lambda^sT_uv}_{R^3}\hfill\cr\hfill
-s\underbrace{T_{\d_k\dot\Lambda^{s-2}u}\d_kv}_{R^4}
+\underbrace{R(v,\dot\Lambda^s u)}_{R^5}
-\underbrace{\dot\Lambda^sR(v,u)}_{R^6}
-s\underbrace{R(\d_kv,\d_k\dot\Lambda^{s-2}u)}_{R^7}.}
$$


\subsubsection*{Bounding $R^1$}

{}From the definition of the paraproduct, we have
$R^1=\sum_{j\in\Z} R^1_j$
with  
$$
R^1_j:=\dot S_{j-1}v\,\tilde\varphi(2^{-j}{\rm D})\dot\Lambda^s\,\ddj u
-\tilde\varphi(2^{-j}{\rm D})\dot\Lambda^s\bigl(
\dot S_{j-1}v\ddj u\bigr)-s\dot S_{j-1}\d_kv\,
\tilde\varphi(2^{-j}{\rm D})(\d^k\dot\Lambda^{s-2})\ddj u,
$$
for some suitable  $\tilde\varphi\in\cC_c^\infty(\R^d)$ supported  in an annulus. 
{}From second order Taylor's formula, 
we gather  
$$
R_j^1=
-\Int_{\R^d}\!\Int_0^1 h_{s,j}(y)
y\cdot D^2\dot S_{j-1}v(x-\tau y)\cdot y\:\ddj u(x-y)(1-\tau)\,d\tau\,dy,
$$
where $h_{s,j}:={\cF}^{-1}(|\cdot|^s\tilde\varphi(2^{-j}\cdot))=2^{js}\cF^{-1}(\wt\varphi_s(2^{-j}\cdot))$
and $\wt\varphi_s(\xi):=|\xi|^s\wt\varphi(\xi).$ 
\medbreak
Therefore, 
$$\begin{aligned}\|{R_j^1}\|_{L^2}&\leq2^{j(s-2)}\||\cdot|^2\cF^{-1}\wt\varphi_s\|_{L^1}\|D^2\dot S_{j-1}v\|_{L^\infty}\|\ddj u\|_{L^2}\\
&\lesssim \|D^2v\|_{L^\infty} 2^{j(s-2)} \|\ddj u\|_{L^2}.\end{aligned}$$
Since the spectral localization of the terms $R_j^1$ implies that
$$
\|R^1\|_{L^2}^2\approx \sum_{j\in\Z} \|R_j^1\|_{L^2}^2,
$$
and because, for any $\sigma\in\R,$ we have
$$
\|z\|_{\dot H^\sigma}^2\approx \sum_{j\in\Z} 2^{2j\sigma} \|\ddj z\|_{L^2}^2,
$$
one ends up with 
$$
\|R^1\|_{L^2}\lesssim\|\nabla^2 v\|_{L^\infty}\|u\|_{\dot H^{s-2}}.
$$


\subsubsection*{Bounding $R^2$}
Combining a standard continuity result  for the paraproduct (see e.g. \cite[Chap. 2]{BCD}) 
with the fact that $\dot\Lambda^s:L^\infty\to\dot B^{-s}_{\infty,\infty}$ is continuous implies if $s>0,$
$$
\|R^2\|_{L^2}\lesssim \|\dot\Lambda^su\|_{\dot B^{-s}_{\infty,\infty}}\|v\|_{\dot H^{s}}\lesssim \|u\|_{L^\infty}\|v\|_{\dot H^{s}}.
$$


\subsubsection*{Bounding $R^3$}
Since $\dot\Lambda^s$ maps ${\dot H}^{s}$ to $L^2$, 
we have for all $s\in\R$, 
$$
\|R^3\|_{L^2}\lesssim\|u\|_{L^\infty}\|v\|_{\dot H^{s}}.
$$


\subsubsection*{Bounding $R^4$}
Standard continuity results for the paraproduct
combined with the fact that 
 $(\d^k|\cdot|^s)({\rm D})$ is a homogeneous multiplier  of degree $s-1$
yield, if $s>1,$  
$$
 \|R^4\|_{L^2}\lesssim\|\nabla\dot\Lambda^{s-2}u\|_{\dot B^{1-s}_{\infty,\infty}}
\|{\nabla v}\|_{{\dot H}^{s-1}} \lesssim\|u\|_{L^\infty}\|v\|_{{\dot H}^{s}}.
$$


\subsubsection*{Bounding $R^5$}
Basic results of continuity for
the remainder, see e.g. \cite{RS}, ensure that for  $s\in\R,$ since $\dot\Lambda^s$ maps 
${\rm L}^\infty$ to  ${\dot F}^{-s}_{\infty,2}$, 
$$
\|R^5\|_{L^2}\lesssim\|v\|_{\dot H^{s}}\|\dot\Lambda^s u\|_{\dot F^{-s}_{\infty,2}}\lesssim\|v\|_{\dot H^{s}}\|u\|_{L^\infty}.
$$


\subsubsection*{Bounding $R^6$}
If  $s>0$, then we have
$$\|R^6\|_{L^2}\lesssim
\|v\|_{{\dot H}^{s}}\|u\|_{L^\infty}, 
$$


\subsubsection*{Bounding $R^7$}
 Since $\d^k\dot\Lambda^{s-2}$ is a homogeneous  multiplier
of degree $s\!-\!1$, we  have
$$\|R^7\|_{L^2}\lesssim\|\nabla v\|_{\dot H^{s-1}}\|\d^k\dot\Lambda^{s-2}u\|_{\dot F^{1-s}_{\infty,2}}
\lesssim\|\nabla v\|_{\dot H^{s-1}}\|u\|_{L^\infty}. $$
Putting together all the above estimates gives what we want.
\end{proof}

\subsection* {Acknowledgments:}
We warmly thank Denis Serre \cite{Scom} for his interest in our work and fruitful discussions about his results in \cite{S}, \cite{GS}, \cite{G2} and the doctoral thesis of Magali Grassin \cite{G}.

 \v S\' arka Ne\v casov\'a was supported by the Czech Science Foundation grant GA19-04243S in the framework of RVO 67985840.

  Rapha\"el Danchin and Bernard Ducomet are partially supported by the ANR project INFAMIE (ANR-15-CE40-0011).

\vskip1cm
\centerline{Xavier Blanc}
 \centerline{Universit\'e de Paris, Laboratoire Jacques-Louis Lions (LJLL), F-75005 Paris, France}
\centerline{Sorbonne Universit\'e, CNRS, LJLL, F-75005 Paris, France}
 \centerline{E-mail: blanc@ann.jussieu.fr}
\vskip0.5cm
\centerline{Rapha\"el Danchin}
 \centerline{Universit\'e Paris-Est}
\centerline{LAMA (UMR 8050), UPEMLV, UPEC, CNRS}
\centerline{ 61 Avenue du G\'en\'eral de Gaulle, F-94010 Cr\'eteil, France}
 \centerline{E-mail: danchin@u-pec.fr}
\vskip0.5cm
\centerline{Bernard Ducomet}
 \centerline{Universit\'e Paris-Est}
\centerline{LAMA (UMR 8050), UPEMLV, UPEC, CNRS}
\centerline{ 61 Avenue du G\'en\'eral de Gaulle, F-94010 Cr\'eteil, France}
 \centerline{E-mail: bernard.ducomet@u-pec.fr}
\vskip0.5cm
\centerline{\v S\' arka Ne\v casov\' a}
\centerline{Institute of Mathematics of the Academy of Sciences of the Czech Republic}
\centerline{\v Zitn\' a 25, 115 67 Praha 1, Czech Republic}
\centerline{E-mail: matus@math.cas.cz}

\end{document}